\newtheorem{thm}{Theorem}[section]
\newtheorem{cor}[thm]{Corollary}
\newtheorem{lem}[thm]{Lemma}
\newtheorem{prop}[thm]{Proposition}
\newtheorem{result}[thm]{Result}
\theoremstyle{definition}
\theoremstyle{remark}
\numberwithin{equation}{section}
\newcommand{\norm}[1]{\left\Vert#1\right\Vert}
\newcommand{\abs}[1]{\left\vert#1\right\vert}
\newcommand{\D}{\mathbb{D}}
\newcommand{\cx}{{\mathbb{C}}}
\newcommand{\h}{\mathbb{H}}%Hartogs Triangle
\newcommand{\p}{\mathbb{P}}%Product model
\newcommand{\psm}{\sum_{\substack{\alpha_1+\alpha_2\leq k\\ \alpha_3+\alpha_4\leq k}}}
\newcommand{\psk}{\sum_{\abs{\alpha}\leq k}}
\newcommand{\dbar}{\overline{\partial}}
\newcommand{\dom}{\mathrm{Dom}}
\newcommand{\img}{{\mathrm{Img}}}
\newcommand{\tensor}{\otimes}
\newcommand{\csor}{\widehat{\otimes}}
\newcommand{\pss}{\widetilde{W}}
\newcommand{\wk}{W^k}
\begin{document}

\title{Sobolev Regularity  of  the $\overline{\partial}$-equation on the Hartogs Triangle}%
\author{Debraj Chakrabarti}%
\address{TIFR Centre for Applicable Mathematics, Sharada Nagar, Chikkabommasandra, Bengaluru- 560065, India.}
\email{debraj@math.tifrbng.res.in}
\author{Mei-Chi Shaw}
\address{Department of Mathematics, University of Notre Dame, Notre Dame, IN 46556, USA}%
\email{mei-chi.shaw.1@nd.edu}
\thanks{The
second-named author is partially supported by NSF grants.}
%\thanks{}%
\subjclass[2010]{32W05, 32A07}%
%\keywords{}%

%\date{}%
\dedicatory{Dedicated to the memory  of Prof.  Jianguo Cao}%
%\commby{}%
% ----------------------------------------------------------------
\begin{abstract}
The regularity of the  $\overline{\partial}$-problem  on the domain $\{\left\vert{z_1}\right\vert<\left\vert{z_2}\right\vert<1\}$ in 
$\mathbb{C}^2$ is studied using $L^2$-methods.
 Estimates are obtained for the canonical solution in weighted  $L^2$-Sobolev spaces with a weight that is singular at the 
point $(0,0)$.  % The canonical   solution for $\dbar$ with weights  is  regular in the weighted  Sobolev  spaces away from the singularity  $
In particular,   the singularity of the Bergman projection for the Hartogs triangle  is contained at the singular point and it does not propagate. \end{abstract}
\maketitle
% ----------------------------------------------------------------
\section{Introduction}
The {\em Hartogs Triangle}, the bounded pseudoconvex domain $\h$ in $\cx^2$ given by 
$\h=\{(w_1,w_2)\in\cx^2\mid \abs{w_1}<\abs{w_2}<1\}$
is a venerable source of counterexamples to conjectures in complex analysis. 
 The boundary  of $\h$  has a serious singularity at $(0,0)$ near which it cannot be represented as a graph. Though a lot is 
known about $\h$, not all its
mysteries have been uncovered yet.  It is an important  yet simple model domain which needs to be understood thoroughly
in any program of extending classical results of several complex variables from smoothly bounded pseudoconvex
domains to more general domains.
 In this article, we consider the regularity of the $\dbar$-problem
on $\h$ in the $L^2$-Sobolev topology.

 Using integral representations,
the regularity of the $\dbar$-problem has been 
investigated  on $\h$ in \cite{cc, michel},  with estimates in the spaces $\mathcal{C}^{k,\alpha}$  
(functions and forms in $\mathcal{C}^k$,
whose $k$-th partial derivatives are   H\"{o}lder continuous of exponent $\alpha$.)
The remarkable outcome of these investigations is 
that for every  $\dbar$-closed  $(0,1)$-form $g$ on $\h$ of class $\mathcal{C}^{k,\alpha}$, there is a function $u$ on $\h$, 
also of class $\mathcal{C}^{k,\alpha}$ 
such that $\dbar u =g$, and this function $u$ is given by an explicit integral formula.
Note that the $\dbar$-problem is  {\em not}  globally regular on $\h$, i.e., there is a $\dbar$-closed $(0,1)$-form $h$ on $\h$, such that while
$h\in\mathcal{C}^\infty(\overline{\h})$,
 for every  $u$ satisfying $\dbar u= h$,  we have $u\not\in \mathcal{C}^\infty(\overline{\h})$ (see \cite{cc}.)  In contrast, when a domain
 is pseudoconvex with smooth boundary (see \cite{Kohn}),  or its closure has  a Stein neighborhood basis
 (see \cite{Dufresnoy}), one can solve the  $\dbar$-problem to obtain a solution smooth up to the boundary,  provided the data is smooth.

However, it is difficult to use the integral representation method to obtain information about regularity
in Sobolev spaces.   We use a method similar  in spirit to that used in \cite{michel} to obtain estimates in Sobolev spaces
for the canonical solution of the $\dbar$-equation in $\h$. We use the fact that $\h$ is biholomorphic
to a product domain $\p$ to transfer the problem from $\h$ to $\p$   (see Section~\ref{sec-prod} below.)
This opens up the possibility of using 
the technique of \cite{chak-shaw}. The fact that one of the factors in the product representation of $\h$ is non-Lipschitz
causes some technical problems in applying the results of \cite{chak-shaw} but these are easily overcome.
 This leads to estimates in Sobolev-type spaces with weights
singular at the bad point $(0,0)$.

The use of weights in the  $L^2$-method is of course classical. In the context of non-smooth domains, it seems that 
singular weights are a natural device to control the behavior of functions and forms near the singular part of the boundary.
Such weights also arise naturally in recent attempts to generalize classical estimates on the $\dbar$- and $\dbar$-Neumann
problems from smooth to non-smooth strictly pseudoconvex domains (see \cite{ehsani2, ehsani,ehsani1}.)

While the Hartogs triangle is rather special,   right now the method used here seems to be the only 
technique available to study the question treated in this paper. Of course, we can extend the method to related 
``Product-type" singularities. It will be very interesting to have a general technique to deal with the regularity 
 in Sobolev spaces of the $\dbar$-problem on singular  domains such as $\h$.

\subsection{Acknowledgements} The authors thank the referee for his  detailed
 comments and suggestions.  The first-named author thanks  Dr.~S.  Gorai for pointing out an error in 
 the first  version of this paper, and Prof.~M. Vanninathan for helpful hints on Sobolev spaces
on nonsmooth domains. He also thanks  Prof.~M. Ramaswamy, the Dean of the TIFR Centre for Applicable Mathematics,
for her active support of this research.

\section{Sobolev estimates }

 Let $\ell$ be an  integer, and let $L^2(\h,\ell\Phi)$ denote the space of  locally integrable functions $f$  on $\h$ for which the 
norm defined by 
\begin{equation}\label{eq-lphinorm}
\norm{f}_{L^2(\h,\ell\Phi)}^2=\int_\h \abs{w_2}^{2\ell} \abs{f(w)}^2 dV(w) 
\end{equation}
is finite, where $w=(w_1,w_2)$ are the standard coordinates on $\h$, and  here and in the sequel $dV$ denotes Lebesgue measure on Euclidean space.
$\Phi$ here denotes the harmonic function $\Phi(w)= -2 \log\abs{w_2}$ whose multiples are used as weights.
Other related notation  is explained  in Section~\ref{sec-prelim} below.
 Then $\ell=0$ corresponds to 
the usual unweighted  $L^2$-space on $\h$, positive values of $\ell$ correspond to allowing functions to blow up in a controlled way at $0$,
and negative values of $\ell$ correspond to forcing functions to vanish in a weak sense at the point $0$.
 We let $L^2_{0,1}(\h,\ell\Phi)$ denote the space of 
$(0,1)$-forms on $\h$ with coefficients in $L^2(\h,\ell\Phi)$.  On  a space of forms whose coefficients lie in a Hilbert space
 (e.g., $L^2_{0,1}(\h,\ell\Phi)$ here,
and the spaces $W^k_{0,1}(\h,\ell\Phi)$ and $W^k_{0,1}(\p,\ell\Phi)$  defined below),
according to standard convention, we impose a Hilbert space norm whose square is the sum of the squares of  the norms of the coefficients.
 It follows from H\"{o}rmander's theory of  $L^2$-estimates
for the $\dbar$-equation (see  Section~\ref{sec-prelim} below) that given a $\dbar$-closed $f$ in $L^2_{0,1}(\h,\ell\Phi)$, there is a $u$ 
 in $L^2(\h,\ell\Phi)$ such that $\dbar u=f$, and we have 
\begin{equation}\label{eq-l2onh}\norm{u}_{L^2(\h,\ell\Phi)}\leq \sqrt{e} \norm{f}_{L^2_{0,1}(\h,\ell\Phi)}.\end{equation}
(Where $e$ { is} the  base of natural logarithms.) By a standard weak compactness argument,
 among all such solutions $u$ there is a $u_\ell$ of smallest norm, which is 
the {\em (weighted) canonical} solution of $\dbar u=f$,
with weight $\ell\Phi$.  The aim  of this article  is to understand the regularity of $u_\ell$ in terms of that of $f$.

Let $k$ be a non-negative integer, and let $\ell$ be an integer. We introduce the weighted Sobolev space 
$W^k(\h,\ell\Phi)$ of locally integrable functions on $\h$ in the following way. 
For a multi-index  $\alpha=(\alpha_1,\alpha_2,\alpha_3,\alpha_4)$ of non-negative integers,
write $\abs{\alpha}= \sum_{j=1}^4 \alpha_j$, and 
let
\begin{equation}\label{eq-dalpha}  D^\alpha = \frac{\partial^{\abs{\alpha}}}
{\partial w_1^{\alpha_1}\partial{ \overline{w_1}}^{\alpha_{2} }
\partial w_2^{\alpha_3}\partial \overline{w_2}^{\alpha_{4} }},\end{equation}
and define  the  space $W^{k}(\h, \ell\Phi)$
 by the finiteness of the norm
\begin{equation}\label{eq-psskhdefn}
\norm{f}_{\wk(\h,\ell\Phi)}^2 = 
 \psk\int_{\h}\abs{w_2}^{2\ell}\abs{D^\alpha f(w)}^2 dV(w),
\end{equation}
where the derivatives are in the  weak  sense. We will refer to $\wk(\h,\ell\Phi)$ as the  weighted Sobolev space 
of order $k$ on $\h$ with weight $\ell\Phi$. We let $\wk_{0,1}(\h,\ell\Phi)$ be the space of 
 $(0,1)$-forms on $\h$ with coefficients in $W^k(\h,\ell\Phi)$.  The main result of this paper is:
\begin{thm} \label{thm-main} For every non-negative integer $k$ there is a  constant $C>0$, such that for each
$\dbar$-closed  $g$  in $W^{2k}_{0,1}(\h,\ell\Phi)$, 
 the canonical  solution $u_\ell$ of $\dbar u_\ell =g$ is in $\wk(\h, (\ell+2k)\Phi)$,  and satisfies an estimate 
\begin{equation}\label{eq-estimate}
\norm{u_\ell}_{\wk(\h,(\ell+2k)\Phi)} \leq C \norm{g}_{W^{2k}_{0,1}(\h,\ell\Phi)}
\end{equation}
\end{thm}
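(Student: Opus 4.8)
The plan is to transfer the problem from $\h$ to the product domain $\p$ via a biholomorphism, solve the transferred equation there by exploiting the product structure, and then transfer the solution back to $\h$. Write $T\colon\h\to\p$ for the biholomorphism $T(w_1,w_2)=(w_1/w_2,\,w_2)$, so that $\p=\D\times\D^{*}$ with $\D^{*}=\set{0<\abs z<1}$ and $T^{-1}(z_1,z_2)=(z_1z_2,\,z_2)$. The complex Jacobian of $T^{-1}$ being $z_2$, the pullback $f\mapsto f\circ T$ is an isometry of $L^2(\p,(\ell+1)\Phi)$ onto $L^2(\h,\ell\Phi)$; since $T$ takes holomorphic functions bijectively to holomorphic functions, it intertwines the weighted Bergman projections and hence carries the canonical solution on $\p$ to the canonical solution on $\h$. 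Pushing $g=g_1\,d\bar w_1+g_2\,d\bar w_2$ forward by $T$ yields a $\dbar$-closed $(0,1)$-form on $\p$ whose coefficients are bounded combinations of $g_1\circ T^{-1}$ and $g_2\circ T^{-1}$; because the change of variables expresses $\partial_{z_j},\partial_{\bar z_j}$ through $\partial_{w_j},\partial_{\bar w_j}$ with only the \emph{bounded} coefficients $z_1,z_2$, no derivatives are lost and this form lies in $W^{2k}_{0,1}(\p,(\ell+1)\Phi)$ with norm at most $C\norm{g}_{W^{2k}_{0,1}(\h,\ell\Phi)}$. In the other direction the chain rule expresses $\partial_{w_j},\partial_{\bar w_j}$ through $\partial_{z_j},\partial_{\bar z_j}$ with the singular factor $1/z_2$, one power per derivative; consequently, writing $u_\ell=\tilde u\circ T$, one obtains $\norm{u_\ell}_{\wk(\h,(\ell+2k)\Phi)}\le C\norm{\tilde u}_{\wk(\p,(\ell+k+1)\Phi)}$, the weight index $\ell+2k$ being large enough to absorb the (up to) $k$-fold powers of $1/z_2$ that arise. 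Thus Theorem~\ref{thm-main} is reduced to the statement on $\p$: a $\dbar$-closed datum in $W^{2k}_{0,1}(\p,m\Phi)$ has a canonical solution in $\wk(\p,(m+k)\Phi)$, with the corresponding estimate.

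On $\p=\D\times\D^{*}$ I would solve $\dbar\tilde u=\tilde g$, with $\tilde g=\tilde g_1\,d\bar z_1+\tilde g_2\,d\bar z_2$, by the two-step method for product domains of \cite{chak-shaw}. First, for each fixed $z_2$, solve the one-variable equation $\partial_{\bar z_1}v=\tilde g_1$ on the \emph{smooth} factor $\D$ by its canonical solution operator; the $\dbar$-closedness of $\tilde g$ forces $\tilde g-\dbar v=h\,d\bar z_2$ with $h$ holomorphic in $z_1$ for each $z_2$. Second, for each fixed $z_1$, solve $\partial_{\bar z_2}w=h$ on the punctured disc $\D^{*}$ by its weighted canonical solution operator (the weight $\abs{z_2}^{2m}$ is constant in $z_1$), and set $\tilde u=v+w$. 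Since every slice $v(\cdot,z_2)$ and every slice $w(z_1,\cdot)$ is orthogonal to the holomorphic functions on the relevant factor, and the weight depends on $z_2$ alone, $\tilde u$ is orthogonal to the holomorphic functions on $\p$ and is therefore the canonical solution. It then remains to propagate Sobolev estimates through the two steps: the canonical solution operator on $\D$ is bounded on $W^{j}(\D)$ uniformly in the slice parameter $z_2$, Sobolev norms on a product are equivalent to sums of mixed one-variable norms, and the passage from $v$ to $h$ costs only a bounded number of derivatives of $v$; so these estimates assemble once the one-variable estimate on $\D^{*}$ is in hand.

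That last estimate — weighted $L^2$-Sobolev bounds for $\partial_{\bar z_2}w=h$ on the non-Lipschitz domain $\D^{*}$, with $h$ holomorphic in the parameter $z_1$ — is the heart of the matter. I would establish it by expanding $h$ and $w$ in Laurent series in $z_2$ and solving mode by mode, the weighted canonical solution operator acting diagonally with coefficients computable in closed form; bounding the resulting series is where the admissible weight shift and the number of derivatives of the datum needed — the $2k$ of the hypothesis — get pinned down, and where the singularity of $\D^{*}$ at the puncture intervenes (for $\ell<0$ one must in particular keep careful track of the Laurent modes of $h$ that blow up at $0$). I expect this one-variable weighted estimate, rather than anything in the product construction or in the transfer, to be the principal difficulty; granting it, combining it with the two-step construction and the transfer identities of the first paragraph gives the estimate \eqref{eq-estimate}, the case $k=0$ being exactly \eqref{eq-l2onh}.
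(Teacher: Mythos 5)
Your overall architecture—transfer to $\p=\D\times\D^*$ with the correct weight bookkeeping for the two pullbacks, solve on the product by combining a slice-wise solution on $\D$ with a weighted solution on $\D^*$, and reduce everything to a one-variable weighted estimate on the punctured disc—is exactly the paper's route, and your derivative/weight counting for the transfer matches Lemma~\ref{prop-fgmapping} and the identity $K^\ell_\h=\mathsf{F}^*_0\circ K^{\ell+1}_\p\circ\mathsf{G}^*_1$. The genuine gap is the step you yourself call the heart of the matter: the bound $K^{\ell}_{\D^*}:W^k_{0,1}(\D^*,\ell\phi)\to W^k(\D^*,(\ell+k)\phi)$ is only ``granted,'' and the sketch offered (Laurent expansion of the datum in $z_2$, mode-by-mode inversion) is not carried out and is not even well posed as stated, since the datum of the $z_2$-problem is not holomorphic in $z_2$; at best one could use a rotational Fourier decomposition, and extracting the weighted Sobolev bound with the right weight shift from the explicit mode kernels is real work. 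The paper closes this in Proposition~\ref{prop-dstarreg} by a short trick you should adopt: for $g\in W^k(\D^*,\ell\phi)$ set $\tilde g=z^{k+\ell}g\in W^k(\D)$, solve $\dbar\tilde u=\tilde g\,d\overline{z}$ canonically on the full disc (gaining one derivative there), put $u=z^{-(k+\ell)}\tilde u$, and then account for the difference between $u$ and the weighted canonical solution by a holomorphic term $z^{-\ell}f$ with $f\in\mathcal O(\D)\cap L^2(\D)$, which is checked directly to lie in the weighted $W^k$ space near the puncture; a cutoff handles the region away from $0$. Note also that the holomorphy of your $h$ in the parameter $z_1$ plays no role in, and does not simplify, this one-variable estimate.

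A second concrete problem is your assembly step: the claim that ``Sobolev norms on a product are equivalent to sums of mixed one-variable norms'' is false, and this is precisely where the $W^{2k}\to W^k$ loss in the theorem originates—not, as you suggest, in the $\D^*$ estimate, which loses no derivatives (only weight). The slice-wise operators $K_\D\csor I_{\D^*}$ and $P_\D\csor K^{\ell+1}_{\D^*}$ are naturally bounded on the partial Sobolev spaces $\pss^k(\p,\cdot)$ of \eqref{eq-psskhdefn2}, and one needs both the identification $\pss^k(\p,\ell\Phi)=W^k(\D)\csor W^k(\D^*,\ell\phi)$—proved in Lemma~\ref{lem-psstensor} by a density argument with Laurent-type monomials, precisely because $\D^*$ is not Lipschitz—and the strict inclusions $W^{2k}\subsetneq\pss^k\subsetneq W^k$ of \eqref{eq-pssinclusions} to convert these bounds into \eqref{eq-estimate}. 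Relatedly, your two-step slice construction is morally the same operator identity as the product formula of \cite{chak-shaw}, but applying it to general $L^2$ data on $\D\times\D^*$ requires extending that theorem to a non-Lipschitz factor (the core argument of Proposition~\ref{prop-g1g2}), and estimating your correction term requires $z_2$-derivatives of $v=K_\D g_1$, which is exactly the mixed-derivative bookkeeping the spaces $\pss^k$ are introduced to handle; as written, your outline takes both points for granted.
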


Note that the order of Sobolev differentiability of the solution  is half of that of the data,   and  the weight  factor in the norm changes
from $\abs{w_2}^{2\ell}$ to $\abs{w_2}^{2(\ell+2k)}$, indicating that the solution $u_\ell$  may have much more
rapid growth near 0 than  $g$ has.

No claim can be made for the optimality of the estimate given in  \eqref{eq-estimate}. Indeed,  the seeming loss of smoothness
from $W^{2k}$ to $W^k$  is illusory, arising from the use of the estimates given in \eqref{eq-pssinclusions} below,
and could in principle be avoided by introducing special weighted Sobolev spaces 
adapted to the Hartogs Triangle, but we have chosen to formulate the result in terms of the simpler spaces $W^k(\h,\ell \Phi)$.
We are interested in quantifying the possible blowup of the solution of the $\dbar$-equation on $\h$ with smooth data, and
this is deduced in Corollary~\ref{cor-blowup} below starting from Theorem~\ref{thm-main}.

The case $\ell=0$ corresponds to the usual canonical solution. In this case we can deduce the following corollary regarding the blowup
of the solution of the $\dbar$-equation at the point (0,0):
\begin{cor}\label{cor-blowup}
Let $g\in\mathcal{C}^\infty_{0,1}(\overline{\h})$ be a $\dbar$-closed $(0,1)$-form  smooth up to the boundary on  $\h$. Then the canonical
solution $u_0$ of the equation $\dbar u_0= g$ is smooth on $\h$ and extends smoothly  up to all points of the  boundary except possibly  at the point $(0,0)$.
If $\alpha$ is a multi-index and $D^\alpha$ is as  in \eqref{eq-dalpha}, we have
\begin{equation}\label{eq-blowup}
\int_\h \abs{w_2}^{4\abs{\alpha}} \abs{D^\alpha u_0(w)}^2 dV(w) <\infty.
\end{equation} 
\end{cor}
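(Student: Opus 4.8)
The plan is to read the corollary off from Theorem~\ref{thm-main} in the case $\ell=0$, using the elementary fact that on $\overline{\h}$ the weight $\abs{w_2}^{2\ell}$ degenerates only at the single point $(0,0)$. Observe first that the apparent loss of derivatives ($W^{2k}$ data, $W^k$ solution) in Theorem~\ref{thm-main} is immaterial here, since we shall apply the theorem for all $k$ at once, thereby controlling every derivative of $u_0$.

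Checking the hypothesis is immediate: since $g\in\mathcal{C}^\infty_{0,1}(\overline{\h})$ and $\h$ is bounded, each weak derivative $D^\alpha g$ is continuous on the compact set $\overline{\h}$, hence bounded, hence square-integrable over $\h$, and because the weight is trivial for $\ell=0$ this says $g\in W^{2k}_{0,1}(\h,\ell\Phi)$ with $\ell=0$ for every $k$. Thus, by Theorem~\ref{thm-main}, the usual canonical solution $u_0$ of $\dbar u_0=g$ (the case $\ell=0$) lies in $W^{k}(\h,2k\Phi)$ for every non-negative integer $k$; that is,
\begin{equation*}
\psk \int_{\h}\abs{w_2}^{4k}\,\abs{D^\alpha u_0(w)}^2\,dV(w)<\infty .
\end{equation*}
In particular each individual integral on the left-hand side is finite, so, for a given multi-index $\alpha$, taking $k=\abs{\alpha}$ and keeping the summand indexed by $\alpha$ yields $\int_{\h}\abs{w_2}^{4\abs{\alpha}}\abs{D^\alpha u_0(w)}^2\,dV(w)<\infty$, which is exactly \eqref{eq-blowup}.

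For the regularity statement I would fix $\delta\in(0,1)$ and set $\Omega_\delta=\h\cap\set{\abs{w_2}>\delta}$. On $\Omega_\delta$ one has $\abs{w_2}^{4k}\geq\delta^{4k}$, so the displayed estimate puts $u_0$ in the ordinary (unweighted) Sobolev space $W^{k}(\Omega_\delta)$ for every $k$. Now $\Omega_\delta$ is a bounded domain that stays away from $(0,0)$, and its boundary is the union of the three smooth hypersurface pieces $\set{\abs{w_2}=1}$, $\set{\abs{w_1}=\abs{w_2},\ \delta\leq\abs{w_2}\leq1}$ and $\set{\abs{w_2}=\delta,\ \abs{w_1}\leq\delta}$, which meet one another transversally along smooth edges; hence $\Omega_\delta$ is a bounded Lipschitz domain, in particular a Sobolev extension domain. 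The Sobolev embedding theorem for such a domain then gives $u_0\in\bigcap_{k}W^{k}(\Omega_\delta)\subset\mathcal{C}^\infty(\overline{\Omega_\delta})$. Finally, $(0,0)$ is the only point of $\overline{\h}$ at which $w_2$ vanishes, so the sets $\overline{\Omega_\delta}$, $0<\delta<1$, exhaust $\overline{\h}\setminus\set{(0,0)}$; patching the resulting smooth representatives gives $u_0\in\mathcal{C}^\infty(\overline{\h}\setminus\set{(0,0)})$, which contains both the interior smoothness of $u_0$ and the asserted smooth extension to every boundary point other than $(0,0)$.

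I do not expect a genuine obstacle once Theorem~\ref{thm-main} is in hand. The only point deserving care is the verification that each $\Omega_\delta$ is regular enough (Lipschitz, or: satisfies a cone condition) for the Sobolev embedding \emph{up to the boundary} to be legitimate; this is precisely where one uses that, away from $(0,0)$, the hypersurface $\set{\abs{w_1}=\abs{w_2}}$ is smooth and crosses $\set{\abs{w_2}=1}$ and $\set{\abs{w_2}=\delta}$ transversally, so that $\Omega_\delta$ has at worst wedge-type boundary singularities.
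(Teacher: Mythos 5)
Your proposal is correct and follows essentially the same route as the paper: apply Theorem~\ref{thm-main} with $\ell=0$ for every $k$, extract \eqref{eq-blowup} by choosing $k=\abs{\alpha}$, and obtain smoothness away from $(0,0)$ by noting the weight $\abs{w_2}^{4k}$ is bounded below on subdomains avoiding the origin and then invoking extension/Sobolev embedding on Lipschitz subdomains. The only (immaterial) difference is your choice of the slices $\h\cap\set{\abs{w_2}>\delta}$ in place of the paper's intersections $\mathbb{B}\cap\h$ with balls avoiding the origin; both require the same easy verification that the localized domain is Lipschitz, which you correctly flag and justify.
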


\begin{proof}
Since $g\in\mathcal{C}^\infty_{0,1}(\overline{\h})$, for every nonnegative
integer $k$, we have $g\in W^{2k}_{0,1}(\h)$, and consequently by Theorem~\ref{thm-main},
the solution $u_0$ is in $W^k(\h, 2k\Phi)$. If $\mathbb{B}$ be an open ball in $\cx^2$, such that $0\not\in \overline{\mathbb{B}}$,
the weight $\abs{w_2}^{4{k}}$ in   the definition of the Sobolev space  $W^k(\h, 2k\Phi)$ is smooth and  bounded away from zero and 
therefore  the restriction of functions in $W^k(\h,2k\Phi)$ to $\mathbb{B}\cap\h$
belong to $W^k({\mathbb{B}}\cap\h)$. Conversely, since $ {\mathbb{B}}\cap\h$ is Lipschitz,
by standard extension results, every  function in $W^k({\mathbb{B}}\cap\h)$ may be extended to a function in $W^k(\h)$.
Since this holds for each $k$, the restriction of $u_0$ to $\mathbb{B}\cap \h$ is in  $\mathcal{C}^\infty(\overline{\mathbb{B}\cap\h})$.
So the canonical solution
$u_0\in\mathcal{C}^\infty(\overline{\h}\setminus \{(0,0)\})$. 
The finiteness of \eqref{eq-blowup} now follows from 
\eqref{eq-estimate}.  
\end{proof}

Recall that the Bergman projection $B$ is defined as the orthogonal projection operator
from $L^2(\h)$ onto the closed subspace $L^2(\h)\cap \mathcal  O(\h)$, where $\mathcal O(\h)$ is the space of holomorphic functions
on $\h$.  We also have the following regularity and irregularity results for the Bergman projection on $\h$. 

\begin{thm}\label{thm-2} For $k\geq 0$, the Bergman projection $B$ maps the  Sobolev space $W^{2k+1}(\h)$ (without weight)  continuously 
into the  weighted holomorphic Sobolev space \[ W^{k}\left(\h,2k\Phi\right)\cap \mathcal{O}(\h).\] It follows that if
$f\in\mathcal{C}^\infty(\overline{\h})$, then $Bf \in \mathcal{C}^\infty(\overline{\h}\setminus\{0\})\cap\mathcal{O}(\h)$.
On the other hand,  $B$ does not map the space $\mathcal C^\infty_0(\h)$  of smooth functions compactly supported in $\h$ into 
 $W^{1}(\h)\cap\mathcal{O}(\h).$\end{thm}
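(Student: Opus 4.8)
The plan is to derive the two positive assertions from Theorem~\ref{thm-main} together with the standard relation between $B$ and the canonical solution operator, and to prove the negative assertion by exhibiting an explicit function in $\mathcal{C}^\infty_0(\h)$ whose Bergman projection fails to lie in $W^1(\h)$.

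For the positive part I would first record that for $f\in W^1(\h)$ the equation $\dbar u=\dbar f$ is solved by $f$ itself, while $f-Bf$ is orthogonal to $L^2(\h)\cap\mathcal{O}(\h)$ and still satisfies $\dbar(f-Bf)=\dbar f$, since $Bf$ is holomorphic; as the canonical solution is the unique solution orthogonal to $\ker\dbar\cap L^2(\h)=L^2(\h)\cap\mathcal{O}(\h)$, this identifies $f-Bf$ with the weighted canonical solution $u_0$ of $\dbar u_0=\dbar f$ with weight $0\cdot\Phi$. Now take $f\in W^{2k+1}(\h)$. Then $\dbar f$ is $\dbar$-closed and lies in $W^{2k}_{0,1}(\h)$ with $\norm{\dbar f}_{W^{2k}_{0,1}(\h)}\le C\norm{f}_{W^{2k+1}(\h)}$, so Theorem~\ref{thm-main} with $\ell=0$ gives $u_0=f-Bf\in\wk(\h,2k\Phi)$ with norm bounded by $C\norm{f}_{W^{2k+1}(\h)}$. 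Because $\abs{w_2}\le1$ on $\h$ we also have $W^{2k+1}(\h)\subset\wk(\h)\subset\wk(\h,2k\Phi)$, with control of norms, so $Bf=f-u_0\in\wk(\h,2k\Phi)\cap\mathcal{O}(\h)$ and the asserted continuity estimate follows. If $f\in\mathcal{C}^\infty(\overline{\h})$ then $f\in W^{2k+1}(\h)$ for every $k$, hence $Bf\in\wk(\h,2k\Phi)$ for every $k$, and the argument used in the proof of Corollary~\ref{cor-blowup}---restriction to $\mathbb B\cap\h$ for a ball $\mathbb B$ with $0\notin\overline{\mathbb B}$, on which the weight $\abs{w_2}^{4k}$ is smooth and bounded below, together with Sobolev embedding on the Lipschitz domain $\mathbb B\cap\h$---shows $Bf\in\mathcal{C}^\infty(\overline{\h}\setminus\{0\})\cap\mathcal{O}(\h)$.

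For the negative assertion I would take $h(w)=1/w_2$, which is holomorphic on $\h$ because $w_2$ never vanishes there; a short polar-coordinate computation gives $\int_\h\abs{w_2}^{-2}\,dV=\pi^2<\infty$, so $h\in L^2(\h)\cap\mathcal{O}(\h)$ and $Bh=h$, whereas $\partial h/\partial w_2=-w_2^{-2}$ and $\int_\h\abs{w_2}^{-4}\,dV=+\infty$, so $h\notin W^1(\h)$. The key lemma I would prove is: \emph{if $g\in W^1(\h)\cap\mathcal{O}(\h)$, then $\langle g,h\rangle_{L^2(\h)}=0$.} Since $\h$ is a Reinhardt domain, such a $g$ has an $L^2(\h)$-convergent Laurent expansion $g=\sum_{(m,n)\in S}a_{mn}w_1^mw_2^n$ over the set $S=\{(m,n)\in\mathbb Z^2: m\ge0,\ m+n\ge-1\}$ of exponents whose monomial belongs to $L^2(\h)$; invariance of $\h$ under $(w_1,w_2)\mapsto(e^{i\theta_1}w_1,e^{i\theta_2}w_2)$ then gives $\langle g,h\rangle=a_{0,-1}\norm{w_2^{-1}}_{L^2(\h)}^2$, so it suffices to show $a_{0,-1}=0$ when $g\in W^1(\h)$. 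But the isotypic component of $\partial_{w_2}g$ of bidegree $(0,-2)$ under the torus action is precisely $-a_{0,-1}w_2^{-2}$, and this must belong to $L^2(\h)$ whenever $\partial_{w_2}g$ does; since $\norm{w_2^{-2}}_{L^2(\h)}^2=\int_\h\abs{w_2}^{-4}\,dV=\infty$, we conclude $a_{0,-1}=0$. Granting the lemma, I would pick $f_j\in\mathcal{C}^\infty_0(\h)$ with $f_j\to h$ in $L^2(\h)$ (possible by density of $\mathcal{C}^\infty_0(\h)$ in $L^2(\h)$); self-adjointness of $B$ gives $\langle Bf_j,h\rangle=\langle f_j,Bh\rangle=\langle f_j,h\rangle\to\norm h^2>0$, so $\langle Bf_j,h\rangle\neq0$ for large $j$, and then $Bf_j\notin W^1(\h)$ by the lemma. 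Any such $f_j$ is the desired function.

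The two positive assertions are formal once Theorem~\ref{thm-main} is available; the real content is the lemma in the negative part, namely that belonging to $W^1(\h)$ is incompatible with having a nonzero $w_2^{-1}$-component. The computation itself is elementary, but it rests on correctly reading off the Reinhardt structure of $L^2(\h)\cap\mathcal{O}(\h)$ and on the borderline failure of square-integrability of $w_2^{-2}$ near the origin---which is exactly the mechanism responsible for the singularity of the Bergman kernel of $\h$ at $(0,0)$. (This last step could instead be carried out on the product model $\p$ via the biholomorphism of Section~\ref{sec-prod}, under which $h$ pulls back to a constant multiple of the constant function and the statement becomes a property of $B_\D\otimes B_\D$ along the puncture locus; I would nevertheless keep the direct Reinhardt argument, since it avoids invoking the detailed dictionary between the weighted Sobolev spaces on $\h$ and on $\p$.)
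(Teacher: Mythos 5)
Your proposal is correct and follows essentially the same route as the paper: the positive assertions come from the identity $Bf=f-K\dbar f$ (which you derive directly rather than quoting Kohn's formula) combined with Theorem~\ref{thm-main} and the argument of Corollary~\ref{cor-blowup}, and the negative assertion rests on the same key fact, namely that $1/w_2$ lies in the Bergman space but is orthogonal to $W^1(\h)\cap\mathcal{O}(\h)$ because the $w_2^{-1}$ Laurent coefficient of any such $g$ must vanish ($w_2^{-2}\notin L^2(\h)$). Your use of the Reinhardt monomials $w_1^mw_2^n$ and of self-adjointness of $B$ applied to an approximating sequence $f_j\to 1/w_2$ is just a mild reorganization of the paper's non-density argument, not a different method.
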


Note that this result shows that the singularity of the Bergman projection for the Hartogs triangle  is contained at the singular point and it does not propagate.  
\section{H\"{o}rmander's existence theorem} \label{sec-prelim}
For a domain $\Omega$ in complex Euclidean space, and  a  real-valued continuous  function $\psi$ on $\Omega$,  recall that
 $L^2(\Omega,\psi)$ denotes  the space of   locally-integrable  functions $f$ on $\Omega$ for which the {\em weighted norm}
\[ \norm{f}_{L^2(\Omega,\psi)}^2 = \int_\Omega \abs{f}^2 e^{-\psi}dV \]
is finite. 
We denote by $L^2_{p,q}(\Omega,\psi)$  the space of $(p,q)$-forms with coefficients in the space $L^2(\Omega,\psi)$.
These are Hilbert spaces under the obvious inner products.

In this paper, on a domain in $\cx^2$,  we will use the  harmonic weight function $\Phi$, given by
\begin{equation}\label{eq-Phi} \Phi(w_1,w_2) = -2 \log\abs{w_2},\end{equation}
which is continuous provided the domain  does not intersect the complex line $\{w_2=0\}$.
Since $e^{-\ell\Phi(w)} = \abs{w_2}^{2\ell}$, this also explains the notations $L^2(\h,\ell\Phi)$ and 
$W^k(\h,\ell\Phi)$ adopted in the previous section for the spaces with norms \eqref{eq-lphinorm} and \eqref{eq-psskhdefn} 
respectively.

 The cornerstone of the $L^2$-theory of 
$\dbar$-operator is the following famous theorem of H\"{o}rmander (\cite[Theorem~2.2.1$^\prime$]{hor}, see also \cite{av} and 
the expositions in \cite{chen-shaw, straube}):
\begin{result}\label{res-hor} Let $\Omega\Subset\cx^n$ be pseudoconvex, and let $\psi\in \mathcal{C}^2(\Omega)$ be a 
strictly plurisubharmonic weight function on $\Omega$. For $z\in \Omega$, denote by $\mu(z)$ the smallest eigenvalue 
of the complex Hessian matrix $\left(\frac{\partial^2 \psi}{\partial z_j\partial \overline{z_k}}(z)\right)_{1\leq  j,k\leq n}$. 
If $\lambda = \inf_{z\in\Omega} \mu(z)>0$, then for  any  $\dbar$-closed
$g\in L^2_{p,q}(\Omega,\psi)$  $(p,q)$-form, $q>0$,  there is a $u\in L^2_{p,q-1}(\Omega,\psi)$ such that $\dbar u=g$,
satisfying the  estimate
\[ \norm{u}_{L^2_{p,q-1}(\Omega,\psi)}\leq \frac{1}{\lambda q} \norm{g}_{L^2_{p,q}(\Omega, \psi)}.\] 
\end{result}
From this the estimate \eqref{eq-l2onh} on $\h$ can be deduced as follows. 
We use the weight $\psi$ on $\h$, where  $\psi(w) = \frac{1}{2}\abs{w}^2+\ell\Phi(w)$. Then the space $L^2(\h,\psi)$
is the same as $L^2(\h,\ell\Phi)$, and the norms are equivalent. In fact it is easy to see that 
\begin{equation}\label{eq-equivalence}
 \frac{1}{\sqrt{e}}\norm{f}_{L^2(\h,\ell\Phi)}\leq \norm{f}_{L^2(\h,\psi)}\leq\norm{f}_{L^2(\h,\ell\Phi)}. 
\end{equation}
But $\psi$ is strictly plurisubharmonic,  and both  eigenvalues of its complex Hessian are identically  1, so in Result~\ref{res-hor}, $\lambda=1$. 
Let $g\in L^2_{0,1}(\h, \psi)$,  with $\dbar g=0$. 
Therefore, there is   a $u$ on $\h$ such that $\dbar u=g$ and $\norm{u}_{L^2(\h,\psi)}\leq \norm{g}_{L^2_{0,1}(\h,\psi)}$.
 Combining this with \eqref{eq-equivalence}, the estimate \eqref{eq-l2onh} follows.

\section{The product model of the Hartogs triangle}\label{sec-prod}
The Hartogs Triangle  is biholomorphic to the product domain $\p=\D\times \D^*$,
where $\D$ is the unit disc $\{z\in\cx\mid \abs{z}<1\}$ and $\D^*$ is the punctured unit disc $\{z\in\cx\mid 0<\abs{z}<1\}$.
The explicit map 
$ \mathsf{F}: \h \to \p $
is given by 
$ (w_1, w_2) \mapsto \left(\frac{w_1}{w_2}, w_2\right),$
and the inverse $\mathsf{G}= \mathsf{F}^{-1}: \p\to \h$ is given by
$  (z_1,z_2)\mapsto (z_1z_2, z_2).$   
This product representation allows us to study the regularity of the $\dbar$-equation on the Hartogs triangle 
using  the technique of \cite{chak-shaw}. Note that the biholomorphisms $\mathsf{F}$ and $\mathsf{G}$ are 
singular at the boundary, so we need to understand how spaces of functions and forms transform under these maps.

Given a  locally integrable function or form $f$ on $\h$,  we let $\mathsf{G}^* f$ denote the pullback of $f$ to $\p$. Similarly,  given
a locally integrable function or form   $g$ on $\p$, we denote by $\mathsf{F}^*g$ its pullback to  a form or function on $\h$.  We now 
consider the mapping properties of  the linear mappings  $\mathsf{F}^*$ and $\mathsf{G}^*$ on weighted  Sobolev spaces.
We denote by $\mathsf{F}^*_1$  and $\mathsf{F}^*_0$, the action of the operator $\mathsf{F}^*$ on $(0,1)$-forms and functions respectively,
and with a similar meaning for $\mathsf{G}^*_1$ and $\mathsf{G}^*_0$.

We define weighted Sobolev spaces $W^k(\p,\ell\Phi)$ on the domain $\p$ by the finiteness of the norm
\[\norm{g}_{\wk(\h,\ell\Phi)}^2 = 
 \psk\int_{\p}\abs{z_2}^{2\ell}\abs{D^\alpha g(z)}^2 dV(z).
\]

%We will also need Weighted Partial Sobolev spaces $\pssk(\p,\ell\Phi)$ defined on $\p$. The definition is analogous to \eqref{eq-psskhdefn}:
%a locally integrable function $f$  on $\p$ is in $\pssk(\p,\ell\Phi)$  if the norm defined by
%\[ \norm{f}_{\pssk(\p,\ell\Phi)}^2 = 
% \psm\int_{\p}\abs{z_2}^{2\ell}\abs{D^\alpha f(z)}^2 dV(z)\]
%is finite. We now prove the following:

\begin{lem}\label{prop-fgmapping}  For each non-negative integer $k$,  and for $m\in \mathbb{Z}$, the pullback operator 
 $\mathsf{F}^*_0$ maps the space  $\wk(\p, (m+1)\Phi)$ continuously and injectively to $\wk(\h,(m+k)\Phi)$.
 Further,  for $k=0$, 
$\mathsf{F}^*_0$  is  actually an isometric isomorphism of the Hilbert space $L^2(\p,(m+1)\Phi)$ with the Hilbert space
$L^2(\h,m\Phi)$, and consequently the inverse mapping $\mathsf{G}^*_0$ is also an isometry from $L^2(\h,m\Phi)$ to 
$L^2(\p,(m+1)\Phi)$.

Also, for each non-negative integer $k$, and for  each  $\ell\in\mathbb{Z}$,
the map $\mathsf{G}^*_1$ maps  the  space of forms $\wk_{0,1}(\h,\ell\Phi)$ continuously and injectively
into $\wk_{0,1}(\p,(\ell+1)\Phi)$.
\end{lem}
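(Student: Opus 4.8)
The plan is to deduce everything from the change-of-variables formula for Lebesgue integrals, combined with a careful accounting of the singular factors that the chain rule introduces under the non-Lipschitz biholomorphisms $\mathsf{F}$ and $\mathsf{G}$. Since $z_2=w_2$ under both maps, and the complex Jacobian determinant of $\mathsf{F}$ at $w$ equals $w_2^{-1}$ while that of $\mathsf{G}$ at $z$ equals $z_2$, the Lebesgue measures transform by $dV(z)=\abs{w_2}^{-2}\,dV(w)$ and $dV(w)=\abs{z_2}^{2}\,dV(z)$. This settles the $k=0$ claim at once: substituting into
\[
\norm{\mathsf{F}^*_0 g}_{L^2(\h,m\Phi)}^2=\int_\h \abs{w_2}^{2m}\abs{g(\mathsf{F}(w))}^2\,dV(w)
\]
and changing variables produces exactly $\int_\p \abs{z_2}^{2(m+1)}\abs{g(z)}^2\,dV(z)=\norm{g}_{L^2(\p,(m+1)\Phi)}^2$, so $\mathsf{F}^*_0$ is a norm-preserving linear bijection of $L^2(\p,(m+1)\Phi)$ onto $L^2(\h,m\Phi)$, with inverse $\mathsf{G}^*_0$, which is then also an isometry.

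For the Sobolev statement about $\mathsf{F}^*_0$, I would first establish, by induction on $\abs\alpha$ using the chain rule, the representation
\[
D^\alpha(\mathsf{F}^*_0 g)=\sum_{\abs\beta\le\abs\alpha}P_{\alpha,\beta}\cdot\bigl((D^\beta g)\circ\mathsf{F}\bigr),
\]
in which each $P_{\alpha,\beta}$ is a Laurent polynomial in $w_2,\overline{w_2}$ with polynomial coefficients in $w_1,\overline{w_1}$. The quantitative heart of the argument, proved in the same induction, is that in every monomial $w_1^{a_1}\overline{w_1}^{a_2}w_2^{c_1}\overline{w_2}^{c_2}$ appearing in $P_{\alpha,\beta}$ one has $a_1+a_2+c_1+c_2\ge-\abs\alpha$; since $\abs{w_1}<\abs{w_2}<1$ on $\h$, this forces the pointwise bound $\abs{P_{\alpha,\beta}(w)}\le C_\alpha\abs{w_2}^{-\abs\alpha}$ on $\h$. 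The induction is pure bookkeeping: a $\partial_{w_1}$ (resp.\ $\partial_{\overline{w_1}}$) either differentiates $P_{\alpha,\beta}$, lowering a monomial's total exponent by one, or creates a factor $w_2^{-1}$ (resp.\ $\overline{w_2}^{-1}$), again lowering it by one; a $\partial_{w_2}$ (resp.\ $\partial_{\overline{w_2}}$) either differentiates $P_{\alpha,\beta}$, creates a factor $-w_1 w_2^{-2}$ (resp.\ $-\overline{w_1}\,\overline{w_2}^{-2}$) whose net effect on the total exponent is again $-1$ because the extra power of $w_1$ partially cancels the $w_2^{-2}$, or merely shifts $\beta$ with no loss. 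Granting the bound, for $\abs\alpha\le k$ we get
\[
\abs{w_2}^{2(m+k)}\abs{D^\alpha(\mathsf{F}^*_0 g)(w)}^2\le C\,\abs{w_2}^{2(m+k-\abs\alpha)}\sum_{\abs\beta\le k}\abs{(D^\beta g)(\mathsf{F}(w))}^2,
\]
and changing variables turns the right-hand side, after integration, into $C\int_\p\abs{z_2}^{2(m+k-\abs\alpha+1)}\sum_{\abs\beta\le k}\abs{D^\beta g(z)}^2\,dV(z)$; since $\abs\alpha\le k$ and $\abs{z_2}<1$ on $\p$, the weight $\abs{z_2}^{2(m+k-\abs\alpha+1)}$ is dominated by $\abs{z_2}^{2(m+1)}$. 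Summing over $\alpha$ gives $\norm{\mathsf{F}^*_0 g}_{\wk(\h,(m+k)\Phi)}\le C\norm{g}_{\wk(\p,(m+1)\Phi)}$, and injectivity is automatic because $\mathsf{F}$ is a bijection of $\h$ onto $\p$.

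For $\mathsf{G}^*_1$ I would run the analogous, but strictly easier, induction for the function pullback $\mathsf{G}^*_0$: the Jacobian entries of $\mathsf{G}$ are $z_1$, $z_2$ and $1$ rather than their reciprocals, so the chain rule yields $D^\alpha(\mathsf{G}^*_0 f)=\sum_{\abs\beta\le\abs\alpha}Q_{\alpha,\beta}\cdot\bigl((D^\beta f)\circ\mathsf{G}\bigr)$ with each $Q_{\alpha,\beta}$ an honest polynomial in $z_1,z_2,\overline{z_1},\overline{z_2}$, hence bounded on $\p$; changing variables then shows $\mathsf{G}^*_0$ maps $\wk(\h,\ell\Phi)$ continuously into $\wk(\p,(\ell+1)\Phi)$. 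Writing a $(0,1)$-form on $\h$ as $f=f_1\,d\overline{w_1}+f_2\,d\overline{w_2}$ and using $\mathsf{G}^*(d\overline{w_1})=\overline{z_2}\,d\overline{z_1}+\overline{z_1}\,d\overline{z_2}$ and $\mathsf{G}^*(d\overline{w_2})=d\overline{z_2}$, the coefficients of $\mathsf{G}^*_1 f$ are $\overline{z_2}\,\mathsf{G}^*_0 f_1$ and $\overline{z_1}\,\mathsf{G}^*_0 f_1+\mathsf{G}^*_0 f_2$; since $\overline{z_1},\overline{z_2}$ are smooth with bounded derivatives on $\p$, multiplication by them preserves $\wk(\p,(\ell+1)\Phi)$ by the Leibniz rule, so both coefficients lie in that space with norm controlled by a constant times $\norm{f}_{\wk_{0,1}(\h,\ell\Phi)}$, and injectivity holds because $\mathsf{G}$ is a bijection and $\overline{z_2}$ is nowhere zero on $\p$. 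Throughout, the chain-rule identities above are valid for weak derivatives, because $\mathsf{F}$ and $\mathsf{G}$ are $C^\infty$ diffeomorphisms between the open domains $\h$ and $\p$, on which the weighted Sobolev functions are locally in $W^k$. The one genuine point requiring care is the inductive pole-order bound for the $P_{\alpha,\beta}$ — that each differentiation worsens the order of the pole along $\{w_2=0\}$ by at most one — since this is precisely what calibrates the weight shift from $(m+1)\Phi$ on $\p$ to $(m+k)\Phi$ on $\h$; everything else reduces to the change of variables together with the elementary inequalities $\abs{w_1}<\abs{w_2}<1$ on $\h$ and $\abs{z_1},\abs{z_2}<1$ on $\p$.
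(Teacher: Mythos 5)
Your proposal is correct and follows essentially the same route as the paper: pointwise chain-rule estimates giving a loss of at most $\abs{w_2}^{-\abs{\alpha}}$ for $\mathsf{F}^*_0$ (and bounded coefficients for $\mathsf{G}^*_1$), combined with the change-of-variables formula with Jacobian factor $\abs{z_2}^2$ and the inequalities $\abs{w_1}<\abs{w_2}<1$. The only difference is cosmetic: you prove the key derivative bound by an explicit induction, where the paper simply invokes the Fa\`a di Bruno formula, and you route $\mathsf{G}^*_1$ through $\mathsf{G}^*_0$ plus multiplication by $\overline{z_1},\overline{z_2}$ rather than estimating the form coefficients directly.
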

We will allow ourselves, in this proof and the sequel, the standard abuse of notation by which $C$ stands for an arbitrary
constant, with possibly different values at different occurrences.
\begin{proof}
Since $\mathsf{G}$ and $\mathsf{F}$ are  biholomorphisms inverse to each other,  it follows that the operators
$\mathsf{G}^*$ and $\mathsf{F}^*$ are also inverses to each other. In particular,  they are both injective.

Let $f$  be a locally integrable function on $\p$ and let $g=\mathsf{F}^*_0 f$. Then $g(w_1,w_2)= f\left(\displaystyle{\frac{w_1}{w_2}}, w_2\right)$. 
Using the chain rule repeatedly (i.e., the  Fa\`{a} di Bruno formula, cf. \cite{fdb}) we see that there is an estimate of the form
\[ \abs{D^\alpha_w g(w_1,w_2)} 
\leq \frac{C}{\abs{w_2}^{\abs{\alpha}}}\sum_{\abs{\beta}\leq\abs{\alpha}} \abs{\left(D^\beta_z f\right)\left(\frac{w_1}{w_2}, w_2\right)}.\]

Now, we have
\begin{align*}\norm{\mathsf{F}^*_0f}_{\wk(\h, (m+k)\Phi)}^2 &= \psk \int_{\h} \abs{w_2}^{2(m+k)}
 \abs{D^\alpha g(w)}^2 dV(w)\\
&\leq  C\psk \int_\h \abs{w_2}^{2(m+k)}\left( \frac{1}{\abs{w_2}^{2k}}
\abs{(D^\alpha_zf)\left(\frac{w_1}{w_2}, w_2\right)}^2\right)dV(w)\\
&\leq C \psk \int_\p \abs{z_2}^{2m}\abs{D^\alpha f}^2\abs{z_2}^2 dV(z)\\ 
&= C\norm{f}^2_{\wk(\p, (m+1)\Phi)},
\end{align*}
where,  in the last but one line, $\abs{z_2}^2$ represents the Jacobian factor in the change of variables.
Considering the case $k=0$ separately, we have
\begin{align*}
\norm{\mathsf{F}^*_0f}_{L^2(\h,m\Phi)} &= \int_{\h} \abs{w_2}^{2m} \abs{g(w)}^2 dV(w)\\
&= \int_\p \abs{z_2}^{2m} \abs{f(z)}^2 \abs{z_2}^2 dV(z)\\
&= \norm{f}_{L^2(\p,(m+1)\Phi)},
\end{align*}
which proves that $\mathsf{F}^*_0$ is an isometry from ${L^2(\p,(m+1)\Phi)}$  onto $L^2(\h,m\Phi)$.

Now, let $g=g_1d\overline{w_1}+ g_2 d\overline{w_2}$  be a $(0,1)$-form on $\h$. The pullback $f= \mathsf{G}^*_1 g$ is 
then given by $f= f_1d\overline{z_1}+ f_2 d\overline{z_2}$, where,
\[ \begin{cases} f_1(z) =g_1(z_1z_2,z_2)\overline{z_2}\\
f_2(z) = g_1(z_1z_2,z_2)\overline{z_1}+ g_2(z_1z_2, z_2).
 \end{cases}\]
Using the Fa\`{a} di Bruno formula again,  we obtain for some constants depending on $\alpha$:
\[ \begin{cases} \abs{D^\alpha f_1}\leq C\sum_{\abs{\beta}\leq \abs{\alpha}}\abs{D^\beta g_1}\\
\abs{D^\alpha f_2} \leq C\sum_{\abs{\beta}\leq \abs{\alpha}}\left( \abs{D^\beta g_1}+ \abs{D^\beta g_2}\right).
\end{cases}\]
Therefore,
\begin{align*}
\norm{\mathsf{G}^*_1g}^2_{\wk(\p, (\ell+1)\Phi)} &= \psk \int_\p \abs{z_2}^{2(\ell+1)} \left(\abs{D^\alpha f_1}^2 + \abs{D^\alpha f_2}^2 \right) dV(z)\\
&\leq C \psk \int_\h \abs{w_2}^{2\ell} \left(\abs{D^\alpha g_1}^2 + \abs{D^\alpha g_2}^2\right) dV(w)\\
&=C\norm{g}^2_{\wk(\h, \ell\Phi)},
\end{align*}
where again we have used the change of variables formula.
\end{proof}

\section{Canonical Solutions  on  $\h$ and $\p$}
\subsection{The Canonical Solution operator}\label{sec-cansol} In this paper we are concerned with the situation 
in which we want to solve on $\h$ the $\dbar$-problem  for a $\dbar$-closed $(0,1)$-form $g$, i.e., find
a function $u$ such that $\dbar u=g$. In view of this,   we confine our discussions to the action of the $\dbar$-operator
on functions, noting here that many of these constructions apply to forms of arbitrary degree.

Let $\Omega$ be a domain and $\psi$ be a continuous weight function on $\Omega$.   As usual, we consider the maximal 
realization of $\dbar$, which is a closed densely defined unbounded operator from $L^2(\Omega,\psi)$  to $L^2_{0,1}(\Omega,\psi)$,
 whose domain $\dom(\dbar)$ consists of all $f\in L^2(\Omega,\psi)$ such that in the distributional sense  $\dbar f \in L^2_{0,1}(\Omega,\psi)$.
If the range $\img(\dbar)$ of the operator $\dbar$ is a closed subspace of $L^2_{0,1}(\Omega,\psi)$,
we can use general functional
analytic methods to define a {\em bounded} solution operator $K:\img(\dbar)\to L^2(\Omega,\psi)$,  which maps a $g\in \img(\dbar)\subset 
L^2_{0,1}(\Omega,\psi)$ to the solution of smallest norm of the equation $\dbar u=g$ (equivalently,  we can say that $Kg$ is the unique solution 
of $\dbar u=g$ which is orthogonal to the {\em Bergman Space} $\mathcal{O}(\Omega)\cap L^2(\Omega,\psi)$.) We can extend $K$ to the whole 
of $L^2(\Omega,\psi)$ by declaring to be zero on $(\dom(\dbar))^\perp \subset L^2(\Omega,\psi)$. This $K$ is referred to the {\em canonical (or Kohn)
solution operator} of the $\dbar$-problem on $\Omega$ with weight $\psi$. In the theory of the $\dbar$-Neumann problem, we can
represent $K$ as $\dbar^*_\psi \mathsf{N}_{\psi, (0,1)}$, where  $\dbar^*_\psi$ is the Hilbert space adjoint of  the $\dbar$ operator, and
$\mathsf{N}_{\psi,(0,1)}$ is the $\dbar$-Neumann operator on the domain $\Omega$ with weight $\psi$ acting on $(0,1)$-forms.
The study of the regularity properties of $\mathsf{N}_{\psi,(0,1)}$ provides a powerful approach to the study of regularity of $K$ itself. Unfortunately
this  method is not available on the non-smooth domain $\h$ we are considering.

For technical reasons we would sometimes like to think of the canonical solution operator  as defined on the  orthogonal direct sum
  $L^2(\Omega,\psi)\oplus L^2_{0,1}(\Omega,\psi)$  and taking values in $L^2(\Omega,\psi)$.
This is achieved by  declaring the operator to be 0 on the functions
in $L^2(\Omega,\psi)$ and  extending linearly.

\subsection{$K^\ell_\h$ and $K^\ell_\p$.} 
 From the discussion in  Section~\ref{sec-prelim} it follows that there exists a canonical solution operator on the domain $\h$ 
with weight $\ell\Phi$ for each $\ell\in\mathbb{Z}$, where $\Phi$ is the harmonic function defined in \eqref{eq-Phi}.
 We denote this operator by $K^\ell_\h$. Then $K^\ell_\h$ is a bounded operator
from $L^2_{0,1}(\h,\ell\Phi)$ to $L^2(\h, \ell\Phi)$.

Similarly, there is for each $\ell\in\mathbb{Z}$, a canonical solution operator $K^\ell_\p$ for the $\dbar$-operator on $\p$.
Applying  Result~\ref{res-hor} to $\p$, with weight $\psi=\frac{1}{2}\abs{z}^2+ \ell\Phi$ gives us a solution to $\dbar v=g$ for
$g\in L^2_{0,1}(\p,\psi)\cap \ker(\dbar)$, satisfying the estimate $\norm{v}_{L^2(\p,\psi)}\leq \norm{g}_{L^2_{0,1}(\p,\psi)}$.
But $L^2(\p,\psi)$ and $L^2(\p,\ell\Phi)$ are the same space with equivalent norms, so we can solve $\dbar v=g$ with 
$g\in L^2_{0,1}(\p,\ell\Phi)\cap \ker(\dbar)$, where $v$  satisfies the estimate $\norm{v}_{L^2(\p,\ell\Phi)}\leq C\norm{g}_{L^2_{0,1}(\p,\ell\Phi)}$.
Now the existence of $K^\ell_\p$ follows as in Section~\ref{sec-prelim}.

We note  the relation between the canonical operators on $\h$ and $\p$:
\begin{lem}We have
\begin{equation}
\label{eq-canrel}
K^\ell_\h=\mathsf{F}_0^*\circ K^{\ell+1}_\p\circ \mathsf{G}_1^*.\end{equation}
\end{lem}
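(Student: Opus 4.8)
The plan is to verify the identity \eqref{eq-canrel} by checking that the operator on the right-hand side has the two defining properties of the canonical solution operator $K^\ell_\h$ on $L^2_{0,1}(\h,\ell\Phi)$: namely, for a $\dbar$-closed datum $g$, (i) the right-hand side produces a genuine solution of $\dbar u = g$ on $\h$, and (ii) that solution is orthogonal in $L^2(\h,\ell\Phi)$ to the Bergman space $\mathcal{O}(\h)\cap L^2(\h,\ell\Phi)$. Since the minimal-norm solution is unique, establishing (i) and (ii) forces the right-hand side to coincide with $K^\ell_\h$. One should also note at the outset that the spaces match up correctly: by Lemma~\ref{prop-fgmapping} (with $k=0$), $\mathsf{G}_1^*$ carries $L^2_{0,1}(\h,\ell\Phi)$ into $L^2_{0,1}(\p,(\ell+1)\Phi)$, then $K^{\ell+1}_\p$ lands in $L^2(\p,(\ell+1)\Phi)$, and finally $\mathsf{F}_0^*$ is the isometric isomorphism from $L^2(\p,(\ell+1)\Phi)$ onto $L^2(\h,\ell\Phi)$ (take $m=\ell$ in the lemma). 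So the composite is a bounded operator $L^2_{0,1}(\h,\ell\Phi)\to L^2(\h,\ell\Phi)$, as required.

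For (i), I would use that $\mathsf{F}=\mathsf{G}^{-1}$ is a biholomorphism, so pullback commutes with $\dbar$: if $g$ is $\dbar$-closed on $\h$, then $\mathsf{G}_1^* g$ is $\dbar$-closed on $\p$, hence $v:=K^{\ell+1}_\p(\mathsf{G}_1^* g)$ satisfies $\dbar v = \mathsf{G}_1^* g$ on $\p$; pulling back by $\mathsf{F}$ and using $\mathsf{F}^*\circ\mathsf{G}^* = \mathrm{id}$ gives $\dbar(\mathsf{F}_0^* v) = \mathsf{F}_1^*(\mathsf{G}_1^* g) = g$ on $\h$. (If $g$ is not $\dbar$-closed, both sides are declared zero on that component by the convention in Section~\ref{sec-cansol}, so there is nothing to check.)

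For (ii), the point is that $\mathsf{F}_0^*$ is not merely bounded but an \emph{isometric isomorphism} of the relevant $L^2$ spaces, and that it respects holomorphicity: $h\in\mathcal{O}(\h)$ iff $\mathsf{G}_0^* h\in\mathcal{O}(\p)$, and $\mathsf{G}_0^*$ maps $L^2(\h,\ell\Phi)$ isometrically onto $L^2(\p,(\ell+1)\Phi)$, so $\mathsf{G}_0^*$ sends the Bergman space $\mathcal{O}(\h)\cap L^2(\h,\ell\Phi)$ onto $\mathcal{O}(\p)\cap L^2(\p,(\ell+1)\Phi)$. Now $v=K^{\ell+1}_\p(\mathsf{G}_1^* g)$ is by definition orthogonal in $L^2(\p,(\ell+1)\Phi)$ to $\mathcal{O}(\p)\cap L^2(\p,(\ell+1)\Phi)$; applying the isometry $\mathsf{F}_0^*$ and its inverse $\mathsf{G}_0^*$, for any $h$ in the Bergman space of $\h$ we get $\langle \mathsf{F}_0^* v, h\rangle_{L^2(\h,\ell\Phi)} = \langle v, \mathsf{G}_0^* h\rangle_{L^2(\p,(\ell+1)\Phi)} = 0$. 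Hence $\mathsf{F}_0^* v$ is the minimal-norm solution, i.e.\ $K^\ell_\h g = \mathsf{F}_0^* v = \mathsf{F}_0^* K^{\ell+1}_\p \mathsf{G}_1^* g$.

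The one genuine point requiring care — the ``main obstacle,'' though it is modest — is verifying that $\mathsf{G}_0^*$ maps the Bergman space onto the Bergman space, i.e.\ that the isometry intertwines the orthogonal projections, which amounts to (a) $\mathsf{G}$ being a biholomorphism and the Jacobian factor being holomorphic (so that $f\in\mathcal{O}(\p)$ implies $\mathsf{F}_0^*f\in\mathcal{O}(\h)$ and vice versa, with the weighted $L^2$ norms matching exactly via Lemma~\ref{prop-fgmapping}), and (b) keeping the bookkeeping of the weight shift $\ell \leftrightarrow \ell+1$ consistent across the function-level isometry and the form-level map. Since the forms appearing are only $(0,1)$, the pullback of $\dbar$-closedness under a biholomorphism is automatic, and the rest is the functional-analytic characterization of the canonical solution recalled in Section~\ref{sec-cansol}.
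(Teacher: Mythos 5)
Your proof is correct and follows essentially the same route as the paper: both arguments hinge on the facts that pullback by the biholomorphism commutes with $\dbar$ and that $\mathsf{F}_0^*$ (equivalently $\mathsf{G}_0^*$) is an isometry between the weighted $L^2$ spaces (Lemma~\ref{prop-fgmapping}), combined with the uniqueness of the canonical solution. The only cosmetic difference is that you invoke the orthogonality-to-the-Bergman-space characterization of the canonical solution, whereas the paper runs a norm-comparison contradiction using the minimal-norm characterization; these two characterizations are noted as equivalent in Section~\ref{sec-cansol}, so the arguments are interchangeable.
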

 \begin{proof}
Denote the operator defined by the right hand side of \eqref{eq-canrel} by $S_\ell$. This  maps $(0,1)$-forms on $\h$  to functions 
on $\h$ and satisfies
\begin{equation}\label{eq-sell} K^{\ell+1}_{\p}\circ \mathsf{G}^*_1 =\mathsf{G}^*_0\circ S_\ell.\end{equation}
Since  the $\dbar$ operator commutes with pullbacks by holomorphic mappings, it follows that 
$S_\ell$ is a solution operator for $\dbar$,  i.e., $\dbar\left(S_\ell g\right) =g$, if $\dbar g=0$ on $\h$.
Further,  $S_\ell$ is bounded from $L^2_{0,1}(\h,\ell\Phi)$ to $L^2(\h,\ell\Phi)$, since we know 
from Lemma~\ref{prop-fgmapping} above that $\mathsf{G}^*_1$ is continuous from the
space $L^2_{0,1}(\h, \ell\Phi)$ to $L^2_{0,1}(\p,(\ell+1)\Phi)$
and that $\mathsf{F}^*_0$ is continuous from $L^2(\p,(\ell+1)\Phi)$ to $L^2(\p,\ell\Phi)$,
and by definition $K^{\ell+1}_\p$ is  continuous 
from $L^2_{0,1}(\p,(\ell+1)\Phi)$ to $L^2(\p,(\ell+1)\Phi)$. 

Suppose now  that $S_\ell\not= K^\ell_\h$ . Then there is a $g\in L^2_{0,1}(\h,\ell\Phi)$  with $\dbar g=0$, and a $u\in L^2(\h,\ell\Phi)$
such that $\dbar u=g$ and $\norm{u}_{L^2(\h,\ell\Phi)}< \norm{S_\ell g}_{L^2(\h.\ell\Phi)}$.  Note that $\mathsf{G}^*_1 g$ is $\dbar$-closed,
and consider the $\dbar$-problem on $\p$ given by $\dbar v =\mathsf{G}^*_1 g$.  Both $\mathsf{G}^*_0 u$ and $K^{\ell+1}_\p (\mathsf{G}^*_1g)$ are solutions of this
equation in $L^2(\p,(\ell+1)\Phi)$, and since $K^{\ell+1}_\p (\mathsf{G}^*_1g)$ is the canonical solution, we have
\begin{equation}\label{eq-cont1} 
\norm{K^{\ell+1}_\p(\mathsf{G}^*_1 g)}_{L^2(\p,(\ell+1)\Phi)}\leq \norm{\mathsf{G}^*_0 u}_{L^2(\p,(\ell+1)\Phi)}.
\end{equation}
Since $\mathsf{G}^*_0$ is an isometry by
Lemma~\ref{prop-fgmapping}, it follows that 
\begin{align*}
\norm{\mathsf{G}^*_0u}_{L^2(\p,(\ell+1)\Phi)}&= \norm{u}_{L^2(\h,\ell\Phi)}\\
&< \norm{S_\ell g}_{L^2(\h.\ell\Phi)}\\
&= \norm{\mathsf{G}^*_0S_\ell g}_{L^2(\p,(\ell+1)\Phi)}\\
&= \norm{K^{\ell+1}_\p(\mathsf{G}^*_1 g)}_{L^2(\p,(\ell+1)\Phi)},
\end{align*}
where we have used \eqref{eq-sell} in the last line. But this 
contradicts \eqref{eq-cont1} and we conclude therefore that $S_\ell = K^\ell_\h$.  
\end{proof}

\subsection{Representation of the Canonical Solution on the product domain $\p$}
In order to estimate the operator $K^\ell_\p$ on the product domain $\p=\D\times \D^*$, we want 
to use  \cite[Theorem~4.7]{chak-shaw} to represent it using terms of  the canonical solution operators  and Bergman 
projections of the factors $\D$ and $\D^*$.  This theorem is as follows  ($\csor$ denotes the Hilbert tensor product of
Hilbert spaces, i.e., the completion of the algebraic tensor product under its  natural hermitian  inner product, see \cite{chak-shaw}):
\begin{result}\label{thm-4pt7a}
  Let $\Omega_1\Subset\cx^{n_1}$ and $\Omega_2\Subset\cx^{n_2}$ be bounded {\em Lipschitz} domains, and let $\psi_1, \psi_2$ be 
continuous functions on $\Omega_1, \Omega_2$ respectively.  Suppose that, for $j=1,2$, the  $\dbar$-operator has closed range 
as an operator from $L^2(\Omega_j,\psi_j)$ to $L^2_{0,1}(\Omega_j,\psi_j)$. Then the $\dbar$-operator has closed range 
from $L^2(\Omega, \psi)$ to $L^2_{0,1}(\Omega,\psi)$, where $\Omega=\Omega_1\times\Omega_2\Subset \cx^{n_1+n_2}$, 
and $\psi=\psi_1+\psi_2$. Further
the canonical solution operator $K:L^2_{0,1}(\Omega,\psi)\to L^2(\Omega,\psi)$ 
restricted to the space of $\dbar$-closed $(0,1)$-forms has the representation
\begin{equation}\label{eq-k}  K = K_1\csor I_2 + \sigma_1 P_1\csor K_2,\end{equation}
where  $K_1,K_2$ are the canonical solution operators on $\Omega_1, \Omega_2$ respectively, $P_1$ is the harmonic 
projection on $\Omega_1$ and $\sigma_1$ is a linear operator which restricts to multiplication by $(-1)^d$ on the space 
of forms of total degree $d$ on $\Omega_1$
\end{result}
Of course, there is a second representation analogous to \eqref{eq-k} obtained by switching the roles of $\Omega_1$ and $\Omega_2$.

Unfortunately, one of the factors $\D^*$ of $\p$ is {\em not} Lipschitz, so  Result~\ref{thm-4pt7a} does not apply as 
stated  in the situation we are interested. 
However, we contend that the conclusion of Result~\ref{thm-4pt7a} still holds for $\Omega_1=\D$ and $\Omega_2=\D^*$ with weights
$\psi_1\equiv 0$ and $\psi_2 = \ell\phi$, where $\phi$ is the harmonic function on $\D^*$ given by 
\begin{equation}\label{eq-phi}\phi(z)=-2\log\abs{z}.\end{equation}

We first state a general result which we can apply to $\p$.  Let $\mathsf{H}_1$ and $\mathsf{H}_2$ be Hilbert spaces, and
let $T:\mathsf{H}_1\to\mathsf{H}_2$ be a densely defined  closed  linear operator from  a subspace $\dom(T)\subset\mathsf{H}_1$
to  a subspace $\mathsf{H}_2$. The {\em graph norm} $\norm{u}_{\Gamma(T)}$ of an element $u\in \dom(T)$ is defined by
$ \norm{u}_{\Gamma(T)}^2= \norm{u}_{H_1}^2 + \norm{Tu}_{H_2}^2$,
and since $T$ is closed, $\dom(T)$ is a Hilbert space in this norm.  Recall that a {\em core} of a densely defined operator $T$ is a 
subspace $\mathcal{G}\subset\dom(T)$ which is dense in $\dom(T)$ in the graph norm (cf. \cite[p. 155]{kr}.) After these definitions, we can state 
the slightly more general form of Result~\ref{thm-4pt7a}:
\begin{prop} \label{prop-g1g2} The hypotheses are the same as in Result~\ref{thm-4pt7a}, except that $\Omega_1$ and $\Omega_2$ are 
not assumed to be Lipschitz. Instead  we assume that there exists a core $\mathcal{G}_1$ of $\dbar: L^2(\Omega_1,\psi_1)\to
L^2_{0,1}(\Omega_1,\psi_1)$ and a core $\mathcal{G}_2$ of $\dbar: L^2(\Omega_2,\psi_2)\to
L^2_{0,1}(\Omega_2,\psi_2)$, such that the algebraic tensor product $\mathcal{G}_1\tensor \mathcal{G}_2$ 
is a core of  the operator $\dbar: L^2(\Omega,\psi)\to
L^2_{0,1}(\Omega,\psi)$,  then the same conclusion (in particular the representation \eqref{eq-k} of the canonical solution) holds.
\end{prop}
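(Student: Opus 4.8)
The plan is to re-examine the proof of Result~\ref{thm-4pt7a}, i.e.\ \cite[Theorem~4.7]{chak-shaw}, and to locate precisely where the Lipschitz hypothesis on the factors $\Omega_1,\Omega_2$ is used. I contend that it enters in exactly one spot: to produce a core of the product $\dbar$-operator which has the form of an algebraic tensor product of cores of the factor operators. Once such a core is \emph{assumed}, as in the statement of the proposition, the remaining steps are purely Hilbert-space-theoretic and carry over without change.

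To make this precise, I would first recall the canonical unitary identifications. Setting $\mathsf{H}_j = L^2(\Omega_j,\psi_j)$ and $\mathsf{H}_j' = L^2_{0,1}(\Omega_j,\psi_j)$, the equality $\psi=\psi_1+\psi_2$ and the product structure of $\Omega$ give isometric isomorphisms $L^2(\Omega,\psi)\cong \mathsf{H}_1\csor\mathsf{H}_2$ and $L^2_{0,1}(\Omega,\psi)\cong \big(\mathsf{H}_1'\csor\mathsf{H}_2\big)\oplus\big(\mathsf{H}_1\csor\mathsf{H}_2'\big)$. Under these, on a product $f_1\tensor f_2$ with $f_j\in\mathcal{G}_j$ the Leibniz rule gives $\dbar_\Omega(f_1\tensor f_2)=(\dbar_1 f_1)\tensor f_2 + \sigma_1 f_1\tensor(\dbar_2 f_2)$; that is, on $\mathcal{G}_1\tensor\mathcal{G}_2$ the operator $\dbar_\Omega$ acts by the tensor-product differential built from $\dbar_1$ and $\dbar_2$. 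Because $\mathcal{G}_j$ is a core of $\dbar_j$ and, \emph{by hypothesis}, $\mathcal{G}_1\tensor\mathcal{G}_2$ is a core of $\dbar_\Omega$, it follows that $\dbar_\Omega$ is the graph closure of this tensor-product differential. In the language of \cite{chak-shaw}, the $\dbar$-complex of $\Omega$ --- in the degrees $(0,0)$ and $(0,1)$, which is all that is needed for the $(0,1)$-problem --- is, as a Hilbert complex, the tensor product of the $\dbar$-complexes of $\Omega_1$ and $\Omega_2$. This is the \emph{only} place where geometry of the domains is used in establishing Result~\ref{thm-4pt7a}, the Lipschitz hypothesis there serving only to guarantee, via density of smooth forms up to the boundary and a Friedrichs-type mollification, that such a product-type core exists.

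With this identification in hand, the conclusion follows from the abstract part of the proof of \cite[Theorem~4.7]{chak-shaw}, which uses no property of the domains beyond the tensor-product structure just verified. Concretely: closed range of $\dbar$ on each factor together with the tensor-product structure forces $\img\dbar_\Omega$ to be closed in $L^2_{0,1}(\Omega,\psi)$; the harmonic (here, $L^2$-holomorphic) space of $\Omega$ is $\ker\dbar_\Omega=\ker\dbar_1\csor\ker\dbar_2$, whose orthogonal projection is $P_1\csor P_2$; and the minimal-norm solution operator for $\dbar_\Omega u=g$, applied to a $\dbar_\Omega$-closed $(0,1)$-form $g$ split according to the decomposition of $L^2_{0,1}(\Omega,\psi)$ above, is $K_1\csor I_2 + \sigma_1 P_1\csor K_2$, which is exactly \eqref{eq-k}. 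The adjoint operator $\dbar^*_\Omega$ and the relevant orthogonal projections are all determined by $\dbar_\Omega$ once its tensor-product structure is known, so no core hypothesis beyond the stated one is needed; in the application $\Omega_1=\D$, $\Omega_2=\D^*$ both factors are one-dimensional, there are no $(0,2)$-forms, and the single core assumed in the proposition genuinely suffices.

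The step I expect to be the main obstacle is the careful bookkeeping underlying the previous two paragraphs: one must verify, by going through the proof of \cite[Theorem~4.7]{chak-shaw} line by line, that the Lipschitz property of the factors is invoked \emph{solely} to construct the product-type core and nowhere else --- in particular that it is not used implicitly in identifying $\dbar^*_\Omega$, in a regularity statement for a factor's Bergman projection, or in the passage to the complex Laplacian. If \cite{chak-shaw} derives the product core from Friedrichs' lemma plus boundary density on Lipschitz domains, then substituting the hypothesis is immediate; and should any residual use of the Lipschitz condition surface, the extreme explicitness of the factors $\D$ and $\D^*$ would allow it to be dealt with directly. I expect this verification to confirm that the existence of a product-type core is indeed the sole obstruction, so that the proposition follows.
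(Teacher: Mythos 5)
Your proposal matches the paper's own proof: the paper likewise observes that the Lipschitz hypothesis in \cite[Theorem~4.7]{chak-shaw} is used only to furnish the cores $\mathcal{C}^\infty(\overline{\Omega_1})$, $\mathcal{C}^\infty(\overline{\Omega_2})$ and the graph-norm density of their tensor product in $\dom(\dbar)$ on $\Omega$, and then notes that all remaining arguments go through verbatim with these cores replaced by the assumed $\mathcal{G}_1$ and $\mathcal{G}_2$. Your added bookkeeping about the tensor-product structure of the Hilbert complexes is consistent with, and essentially an expansion of, that same argument.
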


\begin{proof} We  only indicate the changes that need to be made in the proof of  Result~\ref{thm-4pt7a} as 
given in \cite{chak-shaw} in order to verify this more general statement. Note that the only way the Lipschitz 
condition is used in the proof of Result~\ref{thm-4pt7a} is to provide the cores $\mathcal{C}^\infty(\overline{\Omega_1}),
 \mathcal{C}^\infty(\overline{\Omega_2}), \mathcal{C}^\infty(\overline{\Omega})$ on $\Omega_1$, $\Omega_2$ and $\Omega$,
and to make sure that $\mathcal{C}^\infty(\overline{\Omega_1})\tensor
 \mathcal{C}^\infty(\overline{\Omega_2})$ is dense in the graph-norm $\Gamma(\dbar)$ in $\mathcal{C}^\infty(\overline{\Omega})$
and therefore in $\dom(\dbar)$. It is easy to check all the arguments in \cite{chak-shaw} continue to hold if we replace $\mathcal{C}^\infty(\overline{\Omega_1})$ by $\mathcal{G}_1$ and 
$ \mathcal{C}^\infty(\overline{\Omega_2})$ by $\mathcal{G}_2$.
\end{proof}

We now proceed to apply Proposition~\ref{prop-g1g2} to $\p=\D\times \D^*$. We  take $\Omega_1$ to be $\D$
and $\psi_1\equiv 0$. Then $K_1=K_{\D}$, the canonical solution operator on the unit disc $\D$ without any weight,
and $P_1=P_{\D}$, the Bergman projection on $\D$ in degree 0, and the zero operator in other degrees (since 
harmonic spaces vanish in other degrees.) The closed range property for $\dbar$ and the 
existence of the canonical solution operator is immediate from Result~\ref{res-hor} by using the weight $\psi=\frac{1}{2}\abs{z}^2$.

For $\Omega_2$,  we take the punctured disc $\D^*$. Let $\phi$ be  as in \eqref{eq-phi}. 
We take the weight $\psi_2$ to be $\ell\phi$.  Note that then  $L^2(\D^*,\ell\phi)$ has the norm
\[ \norm{f}_{L^2(\D^*,\ell\phi)}= \int_{\D^*}\abs{z}^{2\ell} f(z)dV(z).\]

We need to show that $\dbar: L^2(\D^*,\ell\phi)\to L^2_{0,1}(\D^*,\ell\phi)$ has closed range. For this we use the
same method as used in the proof of \eqref{eq-l2onh}. In Result~\ref{res-hor}, we let the weight $\psi$ to be
$\psi= \frac{1}{2}\abs{z}^2+\ell\phi$. This immediately shows that for any $gd\overline{z}\in L^2_{0,1}(\D^*,\ell\phi)$, there 
is a $v\in L^2(\D^*, \ell\phi)$ such that  $\dbar v =gd\overline{z} $, i.e. 
$ \frac{\partial v}{\partial \overline{z}}=g$,
and $v$ satisfies the estimate 
\[ \norm{v}_{L^2(\D^*,\ell\phi)} \leq \sqrt{e} \norm{g}_{L^2(\D^*,\ell\phi)}.\]
The existence of the canonical solution  follows as usual. We denote the canonical solution operator
by $K^\ell_{\D^*}$. It is a bounded operator from $L^2_{0,1}(\D^*,\ell\phi)$ to $L^2(\D^*,\ell\phi)$.

In order to apply Proposition~\ref{prop-g1g2} we also need cores $\mathcal{G}_1$ of  $\dbar: L^2(\D)\to L^2_{0,1}(\D)$,
and $\mathcal{G}_2$ of $\dbar: L^2(\D^*,\ell\phi)\to  L^2_{0,1}(\D^*,\ell\phi)$ such that $\mathcal{G}_1\tensor\mathcal{G}_2$
is a core of  $\dbar: L^2(\p,\ell\Phi)\to L^2_{0,1}(\p,\ell\Phi)$. We take $\mathcal{G}_1=\mathcal{C}^\infty(\overline{\D})$. Since
$\D$   has smooth boundary,  it follows that $\mathcal{G}_1$ is a core for the $\dbar$ operator on $L^2(\D)$.  Let 
$\mathcal{G}_2$ be the space of functions on $\D^*$ of the form $z^{-\ell} f$, where $f\in\mathcal{C}^\infty(\overline{\D})$.
We have the following:
\begin{lem}(1) $\mathcal{G}_2$ is a core of $\dbar: L^2(\D^*,\ell\phi)\to  L^2_{0,1}(\D^*,\ell\phi)$.

(2) $\mathcal{G}_1\tensor\mathcal{G}_2$
is a core of  $\dbar: L^2(\p,\ell\Phi)\to L^2_{0,1}(\p,\ell\Phi)$.
\end{lem}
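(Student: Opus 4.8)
The plan is to reduce both assertions to explicit orthonormal-basis computations, exploiting that the weighted spaces on $\D^*$ are unitarily equivalent, via multiplication by $z^{-\ell}$, to unweighted spaces on $\D$. For part (1), observe that the map $M_{z^{-\ell}}: f \mapsto z^{-\ell} f$ is an isometry from $L^2(\D)$ onto $L^2(\D^*,\ell\phi)$, since $\int_{\D^*}|z|^{2\ell}|z^{-\ell}f|^2\,dV = \int_\D |f|^2\,dV$ (note $\D^*$ and $\D$ differ by a set of measure zero). Because $z^{-\ell}$ is holomorphic on $\D^*$, this multiplication operator conjugates $\dbar$ on $L^2(\D^*,\ell\phi)$ to $\dbar$ on $L^2(\D)$ — more precisely, $\dbar(z^{-\ell}f) = z^{-\ell}\dbar f$, so $M_{z^{-\ell}}$ carries $\dom(\dbar)$ and the graph norm on one side to the other. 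Since $\mathcal{G}_2 = M_{z^{-\ell}}(\mathcal{C}^\infty(\overline\D))$ and $\mathcal{C}^\infty(\overline\D)$ is a core for $\dbar$ on $L^2(\D)$ (as $\D$ has smooth boundary), it follows immediately that $\mathcal{G}_2$ is a core for $\dbar$ on $L^2(\D^*,\ell\phi)$.

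For part (2), the natural approach is to transport the problem to the unweighted product $\D\times\D$ by the unitary $U = I_\D \,\widehat\otimes\, M_{z^{-\ell}} : L^2(\p,\ell\Phi)\to L^2(\D\times\D)$, which (as in part (1)) conjugates the $\dbar$-operator on $(\p,\ell\Phi)$ to the $\dbar$-operator on the bidisc $\D\times\D$, since the weight $\ell\Phi(z_1,z_2) = \ell\phi(z_2)$ depends only on $z_2$ and $z_2^{-\ell}$ is holomorphic in the relevant variable. Under $U$, the algebraic tensor product $\mathcal{G}_1\otimes\mathcal{G}_2 = \mathcal{C}^\infty(\overline\D)\otimes M_{z^{-\ell}}(\mathcal{C}^\infty(\overline\D))$ is carried onto $\mathcal{C}^\infty(\overline\D)\otimes\mathcal{C}^\infty(\overline\D)$. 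So it suffices to know that $\mathcal{C}^\infty(\overline\D)\otimes\mathcal{C}^\infty(\overline\D)$ is a core for $\dbar$ on the bidisc $L^2(\D\times\D)$ — but the bidisc is Lipschitz, so this is exactly the density statement established in the proof of Result~\ref{thm-4pt7a} in \cite{chak-shaw} (the statement that $\mathcal{C}^\infty(\overline{\Omega_1})\otimes\mathcal{C}^\infty(\overline{\Omega_2})$ is dense in $\mathcal{C}^\infty(\overline\Omega)$ in the graph norm, hence in $\dom(\dbar)$). Thus part (2) follows by conjugating back.

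The step I expect to require the most care is verifying that $U$ genuinely intertwines the maximal $\dbar$-operators, including their domains: one must check that $g\in\dom(\dbar_{L^2(\p,\ell\Phi)})$ if and only if $Ug\in\dom(\dbar_{L^2(\D\times\D)})$, with $\dbar(Ug) = U(\dbar g)$ in the distributional sense, and that the graph norms correspond. This is where the mild singularity of the weight at $\{z_2=0\}$ enters: since $z_2^{-\ell}$ and its inverse are locally bounded away from $\{z_2=0\}$ and smooth there, multiplication by $z_2^{\mp\ell}$ preserves $W^{1,2}_{\mathrm{loc}}$ and commutes with $\dbar$ distributionally on $\p$ (again using holomorphy), so the equivalence of the two maximal operators goes through; but because $\{z_2=0\}$ is not in the domain, one should note that no boundary contribution arises. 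An alternative, and perhaps cleaner, route that avoids tracking domains is to establish part (2) directly by an orthonormal-basis argument: expand in the basis $\{e_i(z_1)\otimes z_2^{j}\}$ adapted to $(\p,\ell\Phi)$ and verify the core property by the same truncation-and-mollification argument used in \cite{chak-shaw}, replacing the monomials $z_2^j$ by the functions $z_2^{-\ell}\cdot z_2^{j+\ell}$ when $j+\ell\geq 0$; either way, the substance is the reduction to the smooth-boundary bidisc case already treated.
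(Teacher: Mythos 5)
Your proposal is correct and follows essentially the same route as the paper: part (1) is exactly the paper's argument of conjugating by the isometry $f\mapsto z^{-\ell}f$ to reduce to the known core $\mathcal{C}^\infty(\overline{\D})$ on the smooth disc, and your part (2) — conjugating by $I\,\csor\,M_{z_2^{-\ell}}$ to the bidisc and invoking the graph-norm density of $\mathcal{C}^\infty(\overline{\D})\tensor\mathcal{C}^\infty(\overline{\D})$ — is the same two-step argument the paper packages via the intermediate core $\mathcal{H}=z_2^{-\ell}\,\mathcal{C}^\infty(\overline{\D}\times\overline{\D})$ and the $\mathcal{C}^1$-approximation by algebraic tensor products. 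The one point both treatments leave implicit, which you rightly flag, is that the distributional identity $\dbar(z^{\ell}g)=z^{\ell}\dbar g$ extends across the puncture (a set of zero capacity, hence removable for $L^2$ data).
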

\begin{proof} Let $g\in\dom(\dbar)\subset L^2(\D^*,\ell\phi)$. It follows that 
$z^\ell g\in L^2(\D)$ and $\dbar(z^\ell g)= z^\ell \dbar g \in L^2(\D)$. Therefore, $z^\ell g$ 
belongs to the domain of $\dbar$ as an operator on $L^2(\D)$. We take a sequence $\{f_\nu\}$ of 
forms in $\mathcal{C}^\infty(\overline{\D})$ converging in the graph norm of $\dbar$ on $L^2(\D)$ 
to $z^\ell g$. It is easy to see that $z^{-\ell}f_\nu$ converges to $g$ in the graph norm of $\dbar$ on 
$L^2(\D^*,\ell\phi)$. Part (1) follows.

Let $\mathcal{H}$ denote the forms on $\p=\D\times \D^*$ which are of the type $z_2^{-\ell}f$, where
$f\in\mathcal{C}^\infty(\overline{\D}\times\overline{\D})$. An argument analogous to the one in Part (1)
above shows that $\mathcal{H}$ is a core of the $\dbar$ operator acting on $L^2(\p,\ell\Phi)$.  Given
any $z_2^{-\ell}f\in \mathcal{H}$ we can approximate $f$ in the $\mathcal{C}^1$ norm on $\overline{\D}$ 
by elements of the algebraic tensor product $\mathcal{C}^\infty(\overline{\D})\tensor\mathcal{C}^\infty(\overline{\D})$
(cf. \cite[page 369]{hth}.)
From this the statement (2) follows immediately.
\end{proof}

Therefore, we obtain the following representation of the canonical solution $K^\ell_{\p}$ in terms 
of the factor domains $\D$ and $\D^*$. Note that in the second term of \eqref{eq-k}, the only term that 
is non-zero is the term corresponding to functions on $D$, since the harmonic projection vanishes in every 
other degree, and for this remaining term we have $\sigma_1=1$:
\begin{prop}\label{prop-klp}On the $\dbar$-closed $(0,1)$-forms in $L^2_{0,1}(\p,\ell\Phi)$, we have
\[ K^\ell_{\p} = K_\D\csor I_{\D^*} + P_\D \csor K^\ell_{\D^*},\]
where
\begin{itemize}\item $K_\D:L^2_{0,1}(\D)\to L^2(\D)$ is the canonical solution of the $\dbar$ equation on the
unit disc.  (Recall that by convention, we assume that canonical solution operators vanish on functions.)

\item $I_{\D^*}$ is the identity map on functions and forms on $\D^*$.

\item $P_\D:L^2(\D)\to L^2(\D)\cap \mathcal{O}(\D)$ is the Bergman projection. It is extended to 
$L^2_{0,1}(\D)$  by setting it equal to 0.

\item $K^\ell_{\D^*}:L^2_{0,1}(\D^*,\ell\phi)\to L^2(\D^*,\ell\phi)$ is the canonical solution operator on $\D^*$ with weight $\ell\phi$.
\end{itemize}

\end{prop}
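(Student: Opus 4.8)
The plan is to read off the formula as a direct application of Proposition~\ref{prop-g1g2}, taking $\Omega_1=\D$, $\Omega_2=\D^*$, $\psi_1\equiv 0$, and $\psi_2=\ell\phi$. Then $\Omega_1\times\Omega_2=\p$, and since $\Phi(z_1,z_2)=-2\log\abs{z_2}=\phi(z_2)$, the sum weight $\psi_1+\psi_2$ is precisely $\ell\Phi$. First I would confirm that every hypothesis of Proposition~\ref{prop-g1g2} is already in hand: the closed-range property of $\dbar$ on $L^2(\D)$ and on $L^2(\D^*,\ell\phi)$, and hence the existence of the canonical solution operators $K_\D$ and $K^\ell_{\D^*}$, was obtained above from Result~\ref{res-hor} by using the weights $\frac{1}{2}\abs{z}^2$ and $\frac{1}{2}\abs{z}^2+\ell\phi$; the cores are $\mathcal{G}_1=\mathcal{C}^\infty(\overline{\D})$ and $\mathcal{G}_2=\set{z^{-\ell}f\mid f\in\mathcal{C}^\infty(\overline{\D})}$, and the preceding Lemma shows that $\mathcal{G}_1\tensor\mathcal{G}_2$ is a core of $\dbar\colon L^2(\p,\ell\Phi)\to L^2_{0,1}(\p,\ell\Phi)$. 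Proposition~\ref{prop-g1g2} then yields the representation \eqref{eq-k} of the canonical solution operator $K^\ell_\p$, restricted to $\dbar$-closed $(0,1)$-forms, namely $K^\ell_\p=K_1\csor I_2+\sigma_1 P_1\csor K_2$ with $K_1=K_\D$, $K_2=K^\ell_{\D^*}$, $I_2=I_{\D^*}$, and $P_1$ the harmonic projection on $\D$.

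The remaining step is purely a matter of tracking bidegrees on the product. A $\dbar$-closed $(0,1)$-form on $\p=\D\times\D^*$ is a sum of a term of bidegree $(0,1)$ in the $\D$-variable and bidegree $(0,0)$ in the $\D^*$-variable, on which $K_1\csor I_2$ acts, and a term of bidegree $(0,0)$ in the $\D$-variable and bidegree $(0,1)$ in the $\D^*$-variable, on which $\sigma_1 P_1\csor K_2$ acts; in the second summand $P_1$ is applied to the degree-zero component on $\D$. Since $\D\subset\cx$ is one-dimensional, the harmonic projection on $\D$ in degree $0$ is the Bergman projection $P_\D$, while in degree $1$ the harmonic space is trivial, so nothing else survives; and $\sigma_1$ restricted to degree-zero forms is multiplication by $(-1)^0=1$. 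Substituting these identifications into \eqref{eq-k} gives $K^\ell_\p=K_\D\csor I_{\D^*}+P_\D\csor K^\ell_{\D^*}$ on the $\dbar$-closed $(0,1)$-forms, which is the assertion.

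I do not anticipate a genuine obstacle here: the analytic substance --- the closed-range property, the tensor-product decomposition of the canonical solution, and above all the treatment of the non-Lipschitz factor $\D^*$ --- has already been packaged into Proposition~\ref{prop-g1g2} together with the core Lemma. The points needing care are only (i) verifying that the weight on $\p$ is literally $\psi_1+\psi_2$ for the chosen $\psi_1,\psi_2$, and (ii) collapsing the general formula \eqref{eq-k} correctly: one must know that the $(0,1)$-harmonic space on $\D$ vanishes, so that the higher-degree part of $P_1$ drops out --- this is immediate from Result~\ref{res-hor} applied in bidegree $(0,1)$ on $\D$ --- and that $\sigma_1=1$ in the single surviving degree. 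With these checks the proposition follows at once.
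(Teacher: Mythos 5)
Your proposal is correct and follows the paper's own route exactly: apply Proposition~\ref{prop-g1g2} with $\Omega_1=\D$, $\psi_1\equiv 0$, $\Omega_2=\D^*$, $\psi_2=\ell\phi$, using the closed-range statements from Result~\ref{res-hor} and the core lemma for $\mathcal{G}_1\tensor\mathcal{G}_2$, and then observe that the harmonic projection on $\D$ survives only in degree $0$ (where it is $P_\D$ and $\sigma_1=1$). Nothing is missing.
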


\section{Proof of Theorem~\ref{thm-main}}

\subsection{Expression of $K^\ell_\h$ in terms of components}
Combining Proposition~\ref{prop-klp} with Lemma~\ref{prop-fgmapping} we obtain the following:
\begin{cor} \label{cor-rep}We have
\begin{equation}\label{eq-rep-klh}K^\ell_{\h} = \mathsf{F}_0^*\circ \left( K_{\D}\csor I_{\D^*}+ P_{\D}\csor K_{\D^*}^{\ell+1}\right)\circ \mathsf{G}_1^*.\end{equation}
\end{cor}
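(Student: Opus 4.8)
The plan is simply to chain together the two structural results that have already been established: the factorization of the canonical solution operator on $\h$ through the product domain $\p$ given in \eqref{eq-canrel}, namely
\[ K^\ell_\h = \mathsf{F}^*_0\circ K^{\ell+1}_\p\circ \mathsf{G}^*_1,\]
and the explicit representation of $K^{\ell+1}_\p$ in terms of the data on the factor domains $\D$ and $\D^*$ furnished by Proposition~\ref{prop-klp}. The only subtlety is a bookkeeping one about the weight index: Proposition~\ref{prop-klp} is stated for the operator $K^\ell_\p$ with weight $\ell\Phi$ on $\p$, whereas what appears in \eqref{eq-canrel} is $K^{\ell+1}_\p$, i.e. the canonical solution operator on $\p$ with weight $(\ell+1)\Phi$. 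So the first thing I would do is reread Proposition~\ref{prop-klp} with $\ell$ replaced by $\ell+1$ throughout; this is legitimate because the construction of $K^m_\p$ (via Result~\ref{res-hor} with the strictly plurisubharmonic weight $\tfrac12|z|^2 + m\Phi$) and the verification of the cores $\mathcal{G}_1\tensor\mathcal{G}_2$ were carried out for every integer value of the weight parameter. This yields
\[ K^{\ell+1}_\p = K_\D\csor I_{\D^*} + P_\D\csor K^{\ell+1}_{\D^*}\]
on $\dbar$-closed $(0,1)$-forms in $L^2_{0,1}(\p,(\ell+1)\Phi)$.

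Next I would substitute this expression for $K^{\ell+1}_\p$ into \eqref{eq-canrel}, obtaining
\[ K^\ell_\h = \mathsf{F}^*_0\circ\left(K_\D\csor I_{\D^*} + P_\D\csor K^{\ell+1}_{\D^*}\right)\circ\mathsf{G}^*_1,\]
which is exactly \eqref{eq-rep-klh}. One point worth remarking on is that the composition makes sense: by Lemma~\ref{prop-fgmapping}, $\mathsf{G}^*_1$ carries $L^2_{0,1}(\h,\ell\Phi)$ into $L^2_{0,1}(\p,(\ell+1)\Phi)$ and preserves $\dbar$-closedness (since $\dbar$ commutes with pullback by the holomorphic map $\mathsf{G}$), so $\mathsf{G}^*_1 g$ lies in precisely the space on which Proposition~\ref{prop-klp} (with index $\ell+1$) computes $K^{\ell+1}_\p$; then $\mathsf{F}^*_0$ maps $L^2(\p,(\ell+1)\Phi)$ back into $L^2(\h,\ell\Phi)$, again by Lemma~\ref{prop-fgmapping}.

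There is essentially no obstacle here — the corollary is a formal consequence of results proved earlier — but if I had to name the one place where care is needed, it is the shift of the weight index by one and the compatibility of the domains of definition at each stage of the composition, since the three operators $\mathsf{F}^*_0$, $K^{\ell+1}_\p$, $\mathsf{G}^*_1$ each live naturally on spaces with different weight exponents ($\ell$ on $\h$ versus $\ell+1$ on $\p$), and one must check that these line up so that the composite is defined on all of (the $\dbar$-closed part of) $L^2_{0,1}(\h,\ell\Phi)$. Once that is verified the identity \eqref{eq-rep-klh} follows immediately by substitution, and no further computation is required.
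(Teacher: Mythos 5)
Your proposal is correct and follows exactly the route the paper intends: the corollary is obtained by substituting the representation of $K^{\ell+1}_\p$ from Proposition~\ref{prop-klp} (with the weight index shifted from $\ell$ to $\ell+1$) into the factorization \eqref{eq-canrel}, with Lemma~\ref{prop-fgmapping} guaranteeing that the weighted $L^2$-spaces line up so the composition is defined. Your explicit attention to the index shift and to $\mathsf{G}^*_1$ preserving $\dbar$-closedness is exactly the bookkeeping the paper leaves implicit.
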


We need to estimate the various operators appearing  in \eqref{cor-rep} in Sobolev spaces in order to prove
Theorem~\ref{thm-main}. The regularity of $\mathsf{G}_1^*$ and $\mathsf{F}_0^*$ in partial Sobolev spaces
has already been discussed in Lemma~\ref{prop-fgmapping}.
We consider the Sobolev Space $W^k(\D)$ on the unit disc of order $k\geq 0$, which is given by 
the finiteness of the norm
\[ \norm{f}_{W^k(\D)}^2= \sum_{\alpha+\beta\leq k}\int_\D 
\abs{\frac{\partial^{\alpha+\beta}f }{\partial z^\alpha\partial\overline{z}^\beta }}^2 dV\]
For the disc  $\D$,   it is well-known from  potential theory that   $K_\D$ maps $\dbar$-closed forms in $W^k_{0,1}({\D})$ to 
functions in $W^{k+1}(\D)$, and the Harmonic projection, which is non-zero only in degree 0, is identical to the Bergman projection
which maps functions in $W^k(\D)$ to holomorphic functions in $W^k(\D)$ (condition~ ``R".)

%-----------------------------------------------------------------------------------------------

\subsection{ Regularity of $K^\ell_{\D^*}$}We use Sobolev spaces with the weight $\phi$ as in \eqref{eq-phi}.
The norm in such a space $W^k(\D^*, \ell\phi)$ is  given by
\[ \norm{f}_{W^k(\D^*,\ell\phi)}^2= \sum_{\alpha+\beta\leq k}\int_\D \abs{z}^{2\ell}
\abs{\frac{\partial^{\alpha+\beta}f }{\partial z^\alpha\partial\overline{z}^\beta }}^2 dV.\]
With respect to these spaces, we have the following:
\begin{prop}\label{prop-dstarreg} For every nonnegative integer $k$, 
the operator $K^\ell_{\D^*}$ is bounded from  the Sobolev space $W^k_{0,1}(\D^*,\ell\phi)$  to the Sobolev space
 $W^k(\D^*, (\ell +k)\phi)$.
%\norm{v_\ell}_{W^{k+1}(\D^*, (k+\ell)\phi)}\leq C \norm{g}_{W^k_{\partial}(\D^*,\ell\phi)}.\]
\end{prop}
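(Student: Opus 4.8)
The plan is to reduce the weighted problem on $\D^*$ to the classical unweighted problem on $\D$ by conjugating with multiplication by $z^\ell$, exactly as in the proof that $\mathcal{G}_2$ is a core. First I would observe that the map $M_\ell : f \mapsto z^\ell f$ is an isometry from $L^2(\D^*,\ell\phi)$ onto $L^2(\D^*) = L^2(\D)$, and more generally, since $z^\ell$ is holomorphic and nonvanishing on $\D^*$, it is bounded both ways between $W^k(\D^*,\ell\phi)$ and $W^k(\D^*)$ for $\ell\ge 0$; for $\ell<0$ one gets a factor $z^{-|\ell|}$ which blows up at the origin, and this is precisely why the weight has to be allowed to increase. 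The cleanest route is to note that for $gd\overline{z}\in W^k_{0,1}(\D^*,\ell\phi)$, the form $z^\ell g\, d\overline z$ lies in $W^k_{0,1}(\D)$ (here I use $\dbar(z^\ell)=0$, so no derivative hits the singular factor), and since $\D$ has smooth boundary, the unweighted canonical solution operator $K_\D$ maps $W^k_{0,1}(\D)$ into $W^{k+1}(\D)\subset W^k(\D)$ by condition ``R'' / potential theory as recalled above.

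The key point is then that $K_\D$ and $K^\ell_{\D^*}$ are intertwined by $M_\ell$ up to the Bergman projection: if $v = K_\D(z^\ell g\,d\overline z)$, then $\dbar v = z^\ell g\,d\overline z$, so $\dbar(z^{-\ell}v) = g\,d\overline z$ on $\D^*$, and since $z^{-\ell}v \in L^2(\D^*,\ell\phi)$, the canonical solution on $\D^*$ is obtained by subtracting its holomorphic component, i.e.
\begin{equation}\label{eq-intertwine}
K^\ell_{\D^*}(g\,d\overline z) = z^{-\ell} v - P^\ell_{\D^*}(z^{-\ell}v),
\end{equation}
where $P^\ell_{\D^*}$ is the Bergman projection of $L^2(\D^*,\ell\phi)$. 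Now $z^\ell P^\ell_{\D^*}(z^{-\ell}\cdot) = P_\D$ (the unweighted Bergman projection on $\D$, since $z^\ell \mathcal{O}(\D^*)\cap L^2 = \mathcal{O}(\D)\cap L^2$ — the potential pole is killed by the $L^2$ condition), so multiplying \eqref{eq-intertwine} by $z^\ell$ gives $z^\ell K^\ell_{\D^*}(g\,d\overline z) = v - P_\D v$, i.e. $K^\ell_{\D^*}(g\,d\overline z) = z^{-\ell}(I-P_\D)K_\D(z^\ell g\,d\overline z)$. Since $(I-P_\D)$ maps $W^k(\D)$ to $W^k(\D)$ and $K_\D$ maps $W^k_{0,1}(\D)$ to $W^{k+1}(\D)\subset W^k(\D)$, the only remaining issue is the effect of the final multiplication by $z^{-\ell}$: for $\ell\ge 0$ this costs nothing in $W^k(\D^*)\hookrightarrow W^k(\D^*,\ell\phi)$, while for $\ell<0$ the $k$ derivatives falling on $z^{-\ell}=z^{|\ell|}$ produce at worst a factor $z^{|\ell|-k}$, which is controlled in the space $W^k(\D^*,(\ell+k)\phi)$ precisely because the weight exponent has been lowered to $\ell+k$ — and for $\ell\ge 0$, $W^k(\D^*,(\ell+k)\phi)\supset W^k(\D^*,\ell\phi)$ makes the target only larger, so the single statement with weight $(\ell+k)\phi$ covers all cases.

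I expect the main obstacle to be the bookkeeping of weighted Leibniz estimates in the last step: one must check carefully, via the product rule, that for $f\in W^k(\D^*)$ all derivatives $D^\alpha(z^{-\ell}f)$ with $|\alpha|\le k$ satisfy $\int_{\D^*}|z|^{2(\ell+k)}|D^\alpha(z^{-\ell}f)|^2\,dV \le C\sum_{|\beta|\le k}\int_{\D^*}|D^\beta f|^2\,dV$, uniformly — the worst term has $k$ derivatives on $z^{-\ell}$, contributing $|z|^{2(\ell+k)}\cdot|z|^{-2(\ell+k)}|f|^2 = |f|^2$ when $\ell+k\ge 0$ (which holds since $k\ge 0$ and we only claim weight $\ell+k$; if $\ell+k<0$ the statement is vacuous as no nonzero holomorphic-type content survives, but in fact $\ell$ ranges over all of $\mathbb Z$ and one simply notes $|z|^{2(\ell+k)-2k} = |z|^{2\ell}$ matches the source weight). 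Beyond this routine but slightly delicate computation, every ingredient — the isometry $M_\ell$, the closed-range/existence of $K^\ell_{\D^*}$ established above, and the classical regularity of $K_\D$ and $P_\D$ on the smooth domain $\D$ — is already in hand.
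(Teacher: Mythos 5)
Your overall architecture --- conjugate by a power of $z$ to transfer the weighted problem on $\D^*$ to the unweighted problem on $\D$, invoke the classical gain for $K_\D$ and condition R for $P_\D$, and come back --- is the same as the paper's, and your intertwining identity $K^\ell_{\D^*}(g\,d\overline z)=z^{-\ell}(I-P_\D)K_\D(z^\ell g\,d\overline z)$ is correct (indeed $P_\D K_\D=0$, so the projection term is redundant) and is a cleaner way to capture canonicity than the paper's decomposition $v=u+h$. But there is a genuine gap in the forward step. You claim that $g\,d\overline z\in W^k_{0,1}(\D^*,\ell\phi)$ implies $z^\ell g\,d\overline z\in W^k_{0,1}(\D)$ because ``$\dbar(z^\ell)=0$, so no derivative hits the singular factor.'' This is wrong: the $W^k$-norm involves the holomorphic derivatives $\partial/\partial z$ as well, and these do hit $z^\ell$. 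The Leibniz expansion of $\partial^{\alpha+\beta}(z^\ell g)/\partial z^\alpha\partial\overline z^\beta$ contains terms $c_j\, z^{\ell-j}\,\partial^{\alpha+\beta-j}g/\partial z^{\alpha-j}\partial\overline z^\beta$ with $j\geq 1$, and
\[
\int_{\D}\abs{z}^{2(\ell-j)}\abs{D^{\gamma}g}^2\,dV
\]
is \emph{not} pointwise dominated by the available quantity $\int_\D\abs{z}^{2\ell}\abs{D^\gamma g}^2\,dV$, since $\abs{z}^{2(\ell-j)}\geq\abs{z}^{2\ell}$ near the origin. Controlling these terms would require genuine weighted Hardy-type inequalities (trading the extra $j$ derivatives of $g$ for the missing power $\abs{z}^{-2j}$), which you neither state nor prove, and which are delicate in two real dimensions.

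The paper sidesteps this entirely by multiplying by $z^{k+\ell}$ rather than $z^\ell$: then every Leibniz term carries the factor $z^{k+\ell-j}$ with $j\leq k$, so $\abs{z^{k+\ell-j}}\leq\abs{z}^{\ell}$ on the unit disc, and the bound $\norm{z^{k+\ell}g}_{W^k(\D)}\leq C\norm{g}_{W^k(\D^*,\ell\phi)}$ is a trivial pointwise estimate. The extra factor $z^k$ is precisely what forces the weight degradation from $\ell$ to $\ell+k$ on the return trip $\tilde u\mapsto z^{-(k+\ell)}\tilde u$ (your own computation of the return trip, which is correct, shows the same loss). The price of this fix is that $M_{k+\ell}$ no longer conjugates canonical solution operators to one another, so one must separately account for the holomorphic discrepancy: the paper writes the canonical solution as $v=u+h$ with $h=z^{-\ell}f$, $f\in\mathcal{O}(\D)\cap L^2(\D)$, checks directly that $h\in W^k(\D^*_{1/2},(\ell+k)\phi)$, and uses a cutoff and standard localization near the outer circle. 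To repair your proof, either adopt the $z^{k+\ell}$ substitution and add this treatment of the holomorphic correction, or supply a proof of the weighted Hardy inequalities your forward step implicitly requires.
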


\begin{proof} Let $g\in W^k(\D^*,\ell\phi)$ , and set $\widetilde{g}= z^{k+\ell}g$.  We claim that $\widetilde{g}\in W^k(\D)$, the unweighted
standard Sobolev space of order $k$ on the disc. Indeed, 
for $\alpha+\beta \leq k$, we have using the Leibniz rule:
%(with  $C_\alpha$ is a constant depending on $\alpha$):
\[ \frac{\partial^{\alpha+\beta} \widetilde{g}}{\partial z^\alpha \partial \overline{z}^\beta}=\sum_{j=0}^\alpha \binom{\alpha}{j}z^{k+\ell-j}
\frac{\partial^{\alpha+\beta-j}g}{\partial z^{\alpha-j}\partial\overline{z}^\beta}.\]
Note that each term in the sum on the right is in $L^2(\D)$, since by hypothesis $g\in W^k(\D^*,\ell\phi)$.  Further, using the fact that
$\abs{z^{k+\ell-j}}\leq \abs{z}^\ell$,  it easily follows that there is an estimate 
\begin{equation}\label{eq-a}
 \norm{\widetilde{g}}_{W^k(\D)}\leq C\norm{g}_{W^k(\D^*,\ell \phi)}.
\end{equation}
Let $\tilde{u}$ denote that canonical solution of the equation  $\dbar\tilde{ u} = \tilde{g}d\overline{z}$ in $L^2(\D)$ 
(without any weights.) Then we know that $\tilde{u}\in W^{k+1}(\D)$, and we have an estimate
\begin{equation}\label{eq-b} \norm{\tilde{u}}_{W^{k+1}(\D)}\leq C\norm{\tilde{g}}_{W^k(\D)}.\end{equation}
Set $u = z^{-(k+\ell)}\tilde{u}$. Then, on $\D^*$, we have $\dbar u = gd\overline{z}$  and 
\begin{align*}
\norm{\widetilde{u}}_{W^{k+1}(\D)}^2&=\norm{z^{k+\ell}{u}}_{W^{k+1}(\D)}^2\\
&= \sum_{\alpha+\beta\leq k+1}\int_\D\abs{\frac{\partial^{\alpha+\beta}}{\partial z^\alpha \partial \overline{z}^\beta}\left( z^{(k+\ell)}u\right)}^2dV\\
&\geq C \sum_{\alpha+\beta\leq k+1}\int_\D\abs{z}^{2(k+\ell)}\abs{\frac{\partial^{\alpha+\beta}u}{\partial z^\alpha \partial \overline{z}^\beta}}^2dV\\
&\geq C \norm{u}_{W^{k+1}(\D^*, (\ell+k)\phi)}.
\end{align*}
Combining this with \eqref{eq-a} and \eqref{eq-b}, we see that there is a linear solution operator  $g\mapsto u$, 
for $\frac{\partial}{\partial\overline{z}}$ 
on the punctured disc $\D^*$ which is continuous from $W^k(\D^*,\ell\phi)$ to $W^{k+1}(\D^*, (\ell+k)\phi)$.

Denote by $v=K_{\D^*}^\ell (gd\overline{z})$ the canonical solution of $\dbar v =gd\overline{z}$ in the weighted space $L^2(\D^*,\ell\phi)$.  Then
$v$ is of the form $u+h$, where  $h$ is a function in the Bergman space
$\mathcal{O}(\D^*)\cap L^2(\D^*,\ell\phi)$, and $u$ is the solution of $\dbar u= gd\overline{z}$ found above.  But then $h$ must be of the form $h=z^{-\ell}f$, 
where $f\in \mathcal{O}(\D)\cap L^2(\D)$.  Denote by $\D^*_{\frac{1}{2}} $ the punctured disc   $\{0<\abs{z}<\frac{1}{2}\}$ of radius $\frac{1}{2}$.
 A direct computation shows that  $h\in W^k\left(\D^*_{\frac{1}{2}}, (\ell+k)\phi\right)$.  Since by the last paragraph, $u\in W^{k+1}(\D^*, (\ell+k)\phi)$, it now follows that
$v\in W^k\left(\D^*_{\frac{1}{2}}, (\ell+k)\phi\right)$.

Now let $\chi$ be a cutoff on $\D$ which is identically 1 on $\{\abs{z}>\frac{1}{2}\}$, and vanishes in a neighborhood of 0.
By standard localization results, $\chi v \in W^{k+1}(\D)$. Combining with the fact that $v\in W^k\left(\D^*_{\frac{1}{2}}, (\ell+k)\phi\right)$,
it follows that $v\in W^k(\D^*, (\ell+k)\phi)$, and the result is proved.
\end{proof}

\subsection{Estimates on $K^\ell_{\h}$}  As in \cite{chak-shaw}, for an integer $k\geq 0$,
 we introduce the {\em weighted partial Sobolev space} $\pss^k(\p, \ell\Phi)$
by the finiteness of the norm:
\begin{equation}\label{eq-psskhdefn2}
\norm{f}_{\pss^k(\p,\ell\Phi)}^2 = 
 \psm\int_{\p}\abs{z_2}^{2\ell}\abs{D^\alpha f(z)}^2 dV(z),
\end{equation}
where $D^\alpha$ is as in \eqref{eq-dalpha}, the derivatives are in the  weak  sense, and  note the special  range of summation.  It is clear that
\begin{equation}\label{eq-pssinclusions} W^{2k}(\p,\ell\Phi)\subsetneq\pss^k(\p,\ell\Phi)\subsetneq W^k(\p,\ell\Phi),\end{equation}
with continuous inclusions.

We begin with the following lemma  which holds for every non-negative $k$  :
\begin{lem}\label{lem-psstensor}

$\pss^k(\p,\ell\Phi)= W^{k}(\D)\csor W^{k}(\D^*,\ell\phi)$.
\end{lem}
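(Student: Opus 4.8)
The plan is to identify $\pss^k(\p,\ell\Phi)$ with a Hilbert tensor product of Sobolev spaces on the two factors $\D$ and $\D^*$. The key observation motivating the special range of summation in the definition \eqref{eq-psskhdefn2} is that the multi-index constraint $\alpha_1+\alpha_2\leq k$, $\alpha_3+\alpha_4\leq k$ decouples the $z_1$-derivatives from the $z_2$-derivatives: a pure $z_1$-derivative of total order at most $k$ paired with a pure $z_2$-derivative of total order at most $k$. This is precisely the structure of the tensor product norm on $W^k(\D)\tensor W^k(\D^*,\ell\phi)$, since the $W^k$-norm on the disc is built from derivatives $\partial^{\alpha+\beta}/\partial z^\alpha\partial\overline z^\beta$ with $\alpha+\beta\leq k$, and the weight $\abs{z_2}^{2\ell}$ depends only on the second variable and so factors through the second tensor slot.

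First I would recall the standard fact (used already in the passage from \cite{chak-shaw}, via the references cited there on tensor products of Sobolev spaces on Lipschitz domains) that for Hilbert spaces of the form $W^k(\Omega_1)$ and $W^k(\Omega_2,\psi_2)$, the Hilbert tensor product $W^k(\Omega_1)\csor W^k(\Omega_2,\psi_2)$ can be concretely realized as the space of locally integrable functions on $\Omega_1\times\Omega_2$ whose norm, defined by summing $\int\abs{z_2}^{2\ell}\abs{D_1^{\beta}D_2^{\gamma}f}^2$ over $\abs{\beta}\leq k$ and $\abs{\gamma}\leq k$ (where $D_1$ collects the $z_1,\overline{z_1}$ derivatives and $D_2$ the $z_2,\overline{z_2}$ derivatives), is finite. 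I would then observe that this norm is literally the norm defining $\pss^k(\p,\ell\Phi)$ in \eqref{eq-psskhdefn2}: writing $\alpha=(\alpha_1,\alpha_2,\alpha_3,\alpha_4)$, the condition $\alpha_1+\alpha_2\leq k$ and $\alpha_3+\alpha_4\leq k$ is exactly $\abs{\beta}\leq k$, $\abs{\gamma}\leq k$ with $\beta=(\alpha_1,\alpha_2)$, $\gamma=(\alpha_3,\alpha_4)$, and $D^\alpha = D_1^\beta D_2^\gamma$. Hence the two spaces carry identical norms on the algebraic tensor product $\mathcal{C}^\infty(\overline\D)\tensor\mathcal{G}_2$ (or an analogous dense class), and completing both sides gives the asserted equality.

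The main obstacle is the non-Lipschitz factor $\D^*$: the general tensor-product-of-Sobolev-spaces identification is usually stated for Lipschitz (or smoothly bounded) domains, whereas $\D^*$ is not Lipschitz at $0$. I would handle this exactly as in the earlier lemmas of the paper: pass through the unweighted space $W^k(\D)$ via multiplication by $z^{k+\ell}$. That is, the map $f\mapsto z_2^{k+\ell}f$ is (up to equivalence of norms, using $\abs{z_2}\leq 1$ and the Leibniz rule as in the proof of Proposition~\ref{prop-dstarreg}) a bounded isomorphism between $W^k(\D^*,\ell\phi)$ and a closed subspace of $W^k(\D)$, and similarly between $\pss^k(\p,\ell\Phi)$ and a closed subspace of $\pss^k(\D\times\D)$ acting only in the second variable; on the latter the Lipschitz tensor-product result applies, since $\D\times\D$ has product Lipschitz (indeed smooth) structure. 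Transporting the identification $\pss^k(\D\times\D)=W^k(\D)\csor W^k(\D)$ back through these isomorphisms, and using that $W^k(\D^*,\ell\phi)\cong z^{-(k+\ell)}\cdot(\text{closed subspace of }W^k(\D))$, yields $\pss^k(\p,\ell\Phi)=W^k(\D)\csor W^k(\D^*,\ell\phi)$. One should check that the density statements needed to justify completing on a common dense subspace are available; these follow from the core lemmas established just before Proposition~\ref{prop-klp} together with the standard approximation of $\mathcal{C}^\infty(\overline{\D}\times\overline{\D})$ by $\mathcal{C}^\infty(\overline\D)\tensor\mathcal{C}^\infty(\overline\D)$ in $\mathcal{C}^1$-norm already cited in the excerpt.
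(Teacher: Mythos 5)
Your opening observation is correct and is precisely the (implicit) first half of the paper's argument: on elementary tensors $f\tensor g$ the norm \eqref{eq-psskhdefn2} factors, because the constraint $\alpha_1+\alpha_2\leq k$, $\alpha_3+\alpha_4\leq k$ decouples the two groups of variables and the weight $\abs{z_2}^{2\ell}$ lives only in the second slot; so the lemma reduces to showing that the algebraic tensor product $W^k(\D)\tensor W^k(\D^*,\ell\phi)$ is \emph{dense} in $\pss^k(\p,\ell\Phi)$. The gap is in how you establish that density. Your reduction to the Lipschitz case rests on the claim that $f\mapsto z^{k+\ell}f$ is an isomorphism of $W^k(\D^*,\ell\phi)$ onto a closed subspace of $W^k(\D)$, i.e.\ that $\norm{f}_{W^k(\D^*,\ell\phi)}$ and $\norm{z^{k+\ell}f}_{W^k(\D)}$ are equivalent. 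Only one of the two inequalities holds. The Leibniz argument from Proposition~\ref{prop-dstarreg} gives $\norm{z^{k+\ell}f}_{W^k(\D)}\leq C\norm{f}_{W^k(\D^*,\ell\phi)}$, but the reverse bound fails for $k\geq 1$: take $k=1$, $\ell=0$ and $f_n(z)=\chi(nz)$ with $\chi\in\mathcal{C}^\infty_0(\{1<\abs{w}<2\})$ nonconstant. Then $\norm{f_n}_{W^1(\D^*)}$ stays bounded away from $0$ (the first-derivative term is scale-invariant in two real dimensions), while $\norm{zf_n}_{W^1(\D)}=O(1/n)$, since every term of the first derivatives of $zf_n$ either carries no derivative of $\chi$ or carries an extra factor $\abs{z}\sim 1/n$ on the support. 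Hence multiplication by $z^{k+\ell}$ is not bounded below, its image need not be closed, and the identification $\pss^k(\D\times\D)=W^k(\D)\csor W^k(\D)$ cannot be transported back: density of the algebraic tensor product in the image, measured in the $W^k(\D\times\D)$ topology, says nothing about density in $\pss^k(\p,\ell\Phi)$ in its own, inequivalent, topology. This is consistent with the rest of the paper, where every multiplication or division by a power of $z_2$ costs a shift in the weight, whereas the lemma asserts an exact equality with no shift.

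The paper instead proves the density statement directly by exhibiting an explicit complete system consisting of elementary tensors: the Laurent monomials $z_1^{\nu_1}\overline{z_1}^{\nu_2}z_2^{\nu_3}\overline{z_2}^{\nu_4}$ with $\nu_1,\nu_2\geq 0$ and $\nu_3+\nu_4\geq k-\ell$ span a dense subspace of $\pss^k(\p,\ell\Phi)$ (starting from completeness of the corresponding system in $L^2(\p,\ell\Phi)$ and tracking how $D^\alpha$ with $\alpha_3+\alpha_4\leq k$ acts on such monomials), and each such monomial visibly splits as a product of an element of $W^k(\D)$ and an element of $W^k(\D^*,\ell\phi)$. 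To repair your argument you need to replace the multiplication trick by some such direct density argument; the norm identification you gave then finishes the proof exactly as you describe.
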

\begin{proof}
Were the domains $\D^*$ and $\D$ both Lipschitz  we could use the method of  \cite[Lemma~5.1]{chak-shaw} directly.  Since $\D^*$ is 
non-Lipschitz, we proceed as follows. For a multi-index $\nu\in \mathbb{Z}^4$, denote by ${\bf z}^\nu$ the 
Laurent-type monomial $z_1^{\nu_1}\overline{z_1}^{\nu_2}z_2^{\nu_3}\overline{z_2}^{\nu_4}$.   For $m\in \mathbb{Z}$, if $S(m)$ denotes the set
of monomials
$\{{\bf z}^{\nu}\mid \nu_1\geq 0, \nu_2\geq 0, \nu_3+\nu_4\geq m\}$, it is easy to see that the elements of $S(-\ell)$ form a
complete set in $L^2(\p,\ell\Phi)$, i.e. the linear span of $S(-\ell)$ is dense in $L^2(\p,\ell\Phi)$.  If $\alpha$ and   $D^\alpha$ 
are as in \eqref{eq-dalpha} with 
$\alpha_3+\alpha_4\leq k$, then  for ${\bf z}^\nu\in S(m)$,
the derivative 
$D^\alpha {\bf z}^{\nu}$ is a scalar multiple of an element of $S(m-k)$.  Further, every element of $S(m-k)$ arises (up to a multiplicative
factor) as a partial derivative of this sort.  By definition, a function
 $f\in \pss^k(\p, \ell\Phi)$, if and only if  $D^\alpha f\in L^2(\p,\ell\Phi)$,  for $\alpha$ with 
$\alpha_1+\alpha_2\leq k$ and 
$\alpha_3+\alpha_4\leq k$. It now follows easily
that $S(-\ell+k)$  is complete in $\pss^k(\p,\ell\Phi)$.
But an element of $S(-\ell+k)$  may be written as 
$(z_1^{\nu_1}\overline{z_1}^{\nu_2})(z_2^{\nu_3}\overline{z_2}^{\nu_4})$, where $\nu_1,\nu_2\geq 0$ and $\nu_3+\nu_4\geq -\ell+k$.
The first factor is in $W^{k}(\D)$ (indeed for any $k$)
and the second factor is in $W^{k}(\D^*,\ell\phi)$.  It follows that the algebraic tensor product $W^{k}(\D)\tensor 
W^{k}(\D^*,\ell\phi)$  is  dense in $\pss^k(\p,\ell\Phi)$ and the result follows. 
\end{proof}

We can now complete the proof of Theorem~\ref{thm-main}. Since $u_\ell= K^\ell_{\h} (g)$,  it is sufficient to
show that the operator $K^\ell_\h$  is bounded from  $W^{2k}(\h,\ell\Phi)$ to $W^k(\h,(l+2k)\Phi)$.
We recall 
that   the representation of $K^\ell_\h$,  given by \eqref{eq-rep-klh},   is:
\[K^\ell_{\h} =  \mathsf{F}_0^*\circ K^{\ell+1}_\p\circ \mathsf{G}_1^*=\mathsf{F}_0^*\circ \left( K_{\D}\csor I_{\D^*}+ P_{\D}\csor K_{\D^*}^{\ell+1}\right)\circ \mathsf{G}_1^*.\]

Thanks to Lemma~\ref{prop-fgmapping},
the operator $\mathsf{G}_1^*$
which occurs as the first factor from the right,
is known to be continuous from $W^{2k}_{0,1}(\h,\ell\Phi)$ to $W^{2k}_{0,1}(\p, (\ell+1)\Phi)$.  Thanks to \eqref{eq-pssinclusions}, 
it now follows that the operator $\mathsf{G}_1^*$ is continuous from $W^{2k}_{0,1}(\h,\ell\Phi)$  to $\pss^k_{0,1}(\p, (\ell+1)\Phi)$.

 We claim that the canonical solution operator $ K^{\ell+1}_\p$, which occurs as the middle factor of the expression for $K^\ell_{\h}$
 is bounded from $\pss^k_{0,1}(\p, (\ell+1)\Phi)$ to $\pss^k(\p,(\ell+k+1)\Phi)$.
 Using  Lemma~\ref{lem-psstensor}
and the fact that forms of different degree are orthogonal by definition,  we have
\begin{equation}\label{eq-psskdecomp}
\pss^k_{0,1}(\p, (\ell+1)\Phi)= W^{k}_{0,1}(\D)\csor W^{k}(\D^*,(\ell+1)\phi)\oplus W^{k}(\D)\csor W^{k}_{0,1}(\D^*,(\ell+1)\phi),
\end{equation}
where $\oplus$ represents orthogonal direct sum of subspaces.
Now we look at the two terms in the expression for $K^{\ell+1}_\p$ which is the middle factor of \eqref{eq-rep-klh}:
\[
 K_{\D}\csor I_{\D^*}+ P_{\D}\csor K_{\D^*}^{\ell+1}.\]

In the first term, $K_{\D}$ is the canonical solution on the disc and maps $W^{k}_{0,1}(\D)$ 
to $W^{k+1}(\D)$. Since the inclusion $W^{k+1}(\D)\subset
W^{k}(\D)$ is continuous, it follows that $K_\D$ is continuous from  $W^{k}_{0,1}(\D)$ to  $W^{k}(\D)$.  Since the inclusion
$W^{k}(\D^*,(\ell+1)\phi)\subset W^{k}(\D^*,(\ell+k+1)\phi)$ is continuous, it follows that the identity map also is continuous from 
$W^{k}(\D^*,(\ell+1)\phi)$ to $W^{k}(\D^*,(\ell+k+1)\phi)$.  Moreover, we defined the canonical solution operator to be zero 
on functions. It follows now  from Lemma~\ref{lem-psstensor} that the operator $ K_{\D}\csor I_{\D^*}$  maps the space
$\pss^k(\p, (\ell+1)\Phi)$ continuously  into $\pss^k(\p,(\ell+k+1)\Phi)$

In the second term $P_{\D}$ is the harmonic projection on the disc, which vanishes
on $(0,1)$-forms, and consequently, this term acts only on the second summand in the orthogonal decomposition \eqref{eq-psskdecomp}
of $\pss^k_{0,1}(\p, (\ell+1)\Phi)$. For functions, the Bergman projection $P_\D$ on the disc preserves the space $W^{k}(\D)$. In 
Proposition~\ref{prop-dstarreg} 
we saw that $K^{\ell+1}_{\D^*}$ maps $W^{k}_{0,1}(\D^*, (\ell+1)\phi)$ continuously into $W^k(\D^*,(\ell+k+1)\phi)$. It follows from
 Lemma~\ref{lem-psstensor} that
$P_\D\csor K_{\D^*}^{\ell+1}$ also maps the space $\pss^k(\p, (\ell+1)\Phi)$ continuously  into $\pss^k(\p,(\ell+k+1)\Phi)$,
and the same is true of the sum $K^{\ell+1}_\p=K_{\D}\csor I_{\D^*}+ P_{\D}\csor K_{\D^*}^{\ell+1}$, which occurs as the middle factor of 
\eqref{eq-rep-klh}.

%%%%%%%%%%%%%%%%%%%%%%%%%%%%%%%%%%%%%%%%%%%%%%%%%%%%%%%%%%%%%%%%

The continuous inclusions of \eqref{eq-pssinclusions} now imply that $K^{\ell+1}_\p$ maps  $W^{2k}_{0,1}(\p,(\ell+1)\Phi)$ continuously
into $W^k(\p,(\ell+k+1)\Phi)$.  But the factor $\mathsf{F}_0^*$ in \eqref{eq-rep-klh}, by Lemma~ \ref{prop-fgmapping},
 maps $W^k(\p,(\ell+k+1)\Phi)$ continuously into $W^k(\h,(\ell+2k)\Phi)$.
It follows that $K^\ell_{\h}$ is continuous from $W^{2k}_{0,1}(\h,\ell\Phi)$ to $W^k(\h,(\ell+2k)\Phi)$,  and Theorem~\ref{thm-main} is proved.

\section{Proof of Theorem~\ref{thm-2} }
By a    result of Kohn (see \cite[p.~82]{chen-shaw}), the Bergman projection $B$  on $\h$
can be represented in terms of the $\dbar$-Neumann operator
$\mathsf{N}$ as:
\begin{align*} B&= I- \dbar^* \mathsf{N} \dbar\\
&= I- K \dbar,
\end{align*}
 where $K= K^0_{\h}$ is the (unweighted) canonical solution operator on the domain $\h$. Now the $\dbar$ operator maps
$W^{2k+1}(\h)$ continuously into $W^{2k}_{0,1}(\h)$, and thanks to the regularity result for $K$ established in Theorem~\ref{thm-main},
it follows that $K$ maps $W^{2k}_{0,1}(\h)$ continuously into $W^{k}(\h, 2k\Phi)$.  Since the space 
$W^{k}(\h, 2k\Phi)$ continuously includes the space  $W^{2k+1}(\h)$, it follows by Kohn's formula above,
$B$ is continuous as well between these spaces. This proves the first  statement.

For the second statement, we can either repeat the argument used in the proof of Corollary~\ref{cor-blowup}, or we can use Kohn\rq{}s formula and  
Corollary~\ref{cor-blowup} directly: if $f\in \mathcal{C}^\infty(\overline{\h})$, clearly
 $\dbar f \in \mathcal{C}^\infty_{0,1}(\overline{\h})\cap \ker(\dbar)$, so by Corollary~\ref{cor-blowup}  we have 
$K\dbar f \in \mathcal{C}^\infty_{0,1}(\overline{\h}\setminus\{0\})$. Therefore, $Bf = f- K\dbar f$ is  in
 $\mathcal{C}^\infty_{0,1}(\overline{\h}\setminus\{0\})\cap\mathcal{O}(\h)$.

We claim that to show that $B$ does not map the space $\mathcal C^\infty_0(\h)$  into 
 $W^{1}(\h)$, it suffices to show that $W^{1}(\h)\cap\mathcal{O}(\h)$ is not dense in the Bergman space 
$L^2(\h)\cap\mathcal{O}(\h)$ in the $L^2$-topology. Indeed, if $\{f_n\}$ is a sequence of functions
in $\mathcal{C}^\infty_0(\h)$ which converge in $L^2$ to a Bergman function $f\in L^2(\h)\cap\mathcal{O}(\h)$,
then $Bf_n$ converges to $f$ in $L^2$. If $Bf_n\in W^{1}(\h)$, this would imply that $W^{1}(\h)$ is dense in $L^2(\h)$. 

To show that $W^{1}(\h)\cap\mathcal{O}(\h)$ is not dense in the Bergman space $L^2(\h)\cap\mathcal{O}(\h)$,
it is sufficient to find a non-zero  function  $f\in L^2(\h)\cap\mathcal{O}(\h) $ which lies in the orthogonal
complement of $W^{1}(\h)\cap\mathcal{O}(\h)$. We can take $f(w)= \displaystyle{\frac{1}{w_2}}$. This $f$  is in 
the Bergman space  $L^2(\h)\cap\mathcal{O}(\h)$, since using the standard biholomorphism from $\h$ to $\p$ 
given by $(w_1,w_2)\mapsto \left( \frac{w_1}{w_2},w_2\right)$ we obtain
\begin{align*} \int_{\h} \abs{\frac{1}{w_2}}^2 dV(w)
&= \int_\p \frac{1}{\abs{z_2}^2} \abs{z_2}^2 dV(z)\\
&= \pi^2.
\end{align*}
However, since $\frac{\partial f}{\partial w_2} = \frac{1}{w_2^2}$ is not square integrable on $\h$, it follows that 
$f$ is not in $W^{1}(\h)$.

Note that any holomorphic function on the domain $\p$ has a Laurent expansion
\[ \sum_{k=-\infty}^\infty \sum_{j=0}^\infty a_{j,k}z_1^jz_2^k,\]
which converges uniformly on compact subsets of $\p$.  Using again the biholomorphism 
$(w_1,w_2)\mapsto \left( \frac{w_1}{w_2},w_2\right)$, we see that every function in $\mathcal{O}(\h)$ has a Laurent 
expansion
\[ \sum_{k=-\infty}^\infty \sum_{j=0}^\infty a_{j,k}\left(\frac{w_1}{w_2}\right)^jw_2^k,\]
converging uniformly on compact subsets. However, if $j\geq 0$,$k\geq -1$, it is easily seen that each 
Laurent  monomial  $ \left(\frac{w_1}{w_2}\right)^jw_2^k$ is in $L^2(\h)$ and these monomials are
orthogonal. It easily follows from the convergence of the Laurent expansion that these monomials 
are complete in the Bergman space $L^2(\h)\cap\mathcal{O}(\h)$, i.e., their span is dense 
in the Bergman space. 

Now let $g\in W^{1}(\h)\cap \mathcal{O}(\h)$.  Since $ \frac{\partial g}{\partial z_2}\in L^2(\h)$
it follows that in the Laurent expansion of $g$, the coefficient of $\frac{1}{w_2}$ must be 0,  since otherwise,
the expansion of $\frac{\partial g}{\partial z_2}$ will have a term in $\frac{1}{w_2^2}$ which is not in $ L^2(\h)$.
Since the Laurent monomials are orthogonal in $L^2(\h)$,
it follows that $g$ is orthogonal to $f$ (which is a Laurent monomial $\frac{1}{w_2}$),  and our result is proved.

\bigskip
\noindent
{\bf Remarks:}  For a bounded  pseudoconvex domain $\Omega$  in $\cx^n$ with smooth boundary, the space 
$\mathcal C^\infty(\overline\Omega)\cap \mathcal O(\Omega)$ is dense in $L^2(\Omega)\cap \mathcal O(\Omega)$.
This follows from results due to Kohn (see \cite{Kohn}) on the  regularity of the weighted $\dbar$-Neumann  operator  $\mathsf{N}_t$,
 where  the weight 
function $t|z|^2$ with large $t>0$  is used (See  \cite[Theorem~8.1]{Sibony} for a detailed discussion.) 
Using the proof in \cite{Sibony} and the fact $W^1(\h)\cap\mathcal{O}(\h)$ is not dense in the Bergman space 
$L^2(\h)\cap\mathcal{O}(\h)$,  we see that the weighted Bergman projection $B_t$  on the Hartogs triangle is also not bounded from
  $\mathcal C^\infty_0(\h)$  to 
 $W^1(\h)$. The weights $t|z|^2$ can be substituted by any  functions  smooth up to the boundary.

We also mention that  using a result of Barrett (see \cite{Barrett2}),     the Bergman projection on each smooth Diederich-Fornaess worm domain
 $\Omega$  is not regular from $W^s$ to $W^s$ for some $s>0.$ But it is still an open question whether   on each worm  $B(C_0^\infty(\Omega))$
 is not contained in  $W^s(\Omega)$. Our example $\h$ is not smooth.   On the other hand, such examples exist  for  pseudoconvex domains with 
smooth boundary in complex manifolds (see \cite{Barrett1}).

\end{document}